\newtheorem{theorem}{Theorem}[section]
\newtheorem{lemma}[theorem]{Lemma}
\theoremstyle{definition} 
\newtheorem{defn}[theorem]{Definition}
\newtheorem{example}[theorem]{Example}
\newtheorem{remark}[theorem]{Remark}
\newcommand{\qu}{/\kern-.7ex/}
\newcommand{\lqu}{\backslash \kern-.7ex \backslash}
\newcommand{\on}{\operatorname}
\newcommand{\HH}{\mathfrak H}
\title[GW of $X_{D,r}$ with mid-ages and the loop axiom]{Gromov--Witten invariants of root stacks with mid-ages and the loop axiom}
\author{Fenglong You}
\address{Department of Mathematics\\ Imperial College\\ London SW7 2AZ\\ United
Kingdom}
\email{f.you@imperial.ac.uk}
\thanks{}
\keywords{}
\begin{document}
\date{\today}

\begin{abstract} 
We study orbifold Gromov--Witten invariants of the $r$-th root stack $X_{D,r}$ with a pair of mid-ages when $r$ is sufficiently large. We prove that genus $g$ invariants with a pair of mid-ages $k_a/r$ and $1-k_a/r$ are polynomials in $k_a$ and the $k_a^i$-coefficients are polynomials in $r$ with degree bounded by $2g$. Moreover, genus zero invariants with a pair of mid-ages are independent of the choice of mid-ages. As an application, we obtain an identity for relative Gromov--Witten theory which can be viewed as a modified version of the usual loop axiom.
\end{abstract}

\maketitle 

\tableofcontents

\section{Introduction}

\subsection{Overview}
Given a smooth projective variety $X$ with a smooth effective divisor $D$, the root construction is essentially the only way stack structures can arise in codimension one. The $r$-th root stack is denoted by $X_{D,r}$. The moduli space of stable maps to the root stack $X_{D,r}$ provides an alternative compactification of an open substack of the moduli space of relative stable maps to the pair $(X,D)$ \cite{Cadman}. Therefore, orbifold Gromov--Witten invariants of $X_{D,r}$ are closely related to relative Gromov--Witten invariants of $(X,D)$. Results in \cite{ACW}, \cite{TY18} and \cite{TY20} stated that, for sufficiently large $r$, genus $g$ orbifold Gromov--Witten invariants with small ages are polynomials in $r$ with degree bounded by $\max\{2g-1,0\}$ and the constant terms are the corresponding relative Gromov--Witten invariants with positive contact orders. On the other hand, by \cite{FWY}, \cite{FWY19} and \cite{TY20}, genus $g$ orbifold Gromov--Witten invariants with large ages, after multiplying by suitable powers of $r$, are polynomials in $r$ with degree bounded by $\max \{2g-1,0\}$ and the constant terms are the corresponding relative Gromov--Witten invariants with negative contact orders defined in \cite{FWY} and \cite{FWY19}. In particular, by \cite{ACW}, \cite{TY18} and \cite{FWY}, genus zero orbifold invariants,  after multiplying by suitable powers of $r$, are equal to genus zero relative invariants when $r$ is sufficiently large. The type of orbifold invariants of $X_{D,r}$ that has not been explored is the type of orbifold invariants which include orbifold markings with neither small nor large ages. These invariants will be called orbifold Gromov--Witten invariants of $X_{D,r}$ with mid-ages. The purpose of this paper is to understand these invariants. 

One motivation for studying orbifold Gromov--Witten invariants with mid-ages is to find a modified version of the loop axiom for relative Gromov--Witten theory. It was proved in \cite{FWY19} that relative Gromov--Witten theory is a partial cohomological field theory (partial CohFT) in the sense of \cite{LRZ}, which is a CohFT without the loop axiom. A counterexample for the loop axiom for relative Gromov--Witten theory is given in \cite{FWY19}*{Example 3.17} (See also Example \ref{example} for how it can be modified). The loop axiom is related to the quantization and Virasoro constraints. Therefore, one possible future work after our result is to study the Virasoro constraints for relative Gromov--Witten theory.

\subsection{Set-up}

Given a smooth projective variety $X$ with a smooth effective divisor $D$ and $\beta \in H_2(X)$. Let $\vec k=(k_1,\ldots,k_m)$ be a vector of $m$ integers which satisfy 
\[
\sum_{i=1}^m k_i=\int_\beta[D].
\]
The number of positive elements, zero elements and negative elements in $\vec k$ are denoted by $m_+, m_0$ and $m_-$ respectively. So $m=m_++m_0+m_-$.

Evaluation maps for orbifold Gromov--Witten theory of $\mathcal X$ land on the inertia stack $I\mathcal X$ of the target orbifold $\mathcal X$. The coarse moduli space $\underline{I}X_{D,r}$ of the inertia stack of the root stack $X_{D,r}$ can be decomposed into disjoint union of $r$ components
\[
\underline{I}X_{D,r}=X\sqcup \coprod_{i=1}^{r-1} D,
\] 
labeled by ages: $0, 1/r, 2/r, \ldots, (r-1)/r$.

We assume that $r>|k_i|$ for all $i\in\{1,\ldots, m\}$. We consider the moduli space $\overline{M}_{g,\vec k,\beta}(X_{D,r})$ of $m$-pointed, genus $g$, degree $\beta\in H_2(X,\mathbb Q)$, orbifold stable maps to $X_{D,r}$ where the $i$-th marking is an interior marking if $k_i=0$; the $i$-th orbifold marking maps to the twisted sector of the inertia stack of $X_{D,r}$ with age $k_i/r$ if $k_i>0$; the $i$-th orbifold marking maps to the twisted sector of the inertia stack of $X_{D,r}$ with age $(r+k_i)/r$ if $k_i<0$.

Let
\begin{itemize}
    \item $\gamma_{i}\in H^*(\underline{I}X_{D_,r},\mathbb Q)$ for $1\leq i \leq m$;
    \item $a_i\in \mathbb Z_{\geq 0}$, for $1\leq i \leq m$.
\end{itemize}
Then orbifold Gromov--Witten invariants of $X_{D,r}$ are defined as

\begin{align}\label{orb-inv}
\left\langle \prod_{i=1}^m\tau_{a_{i}}(\gamma_{i})\right\rangle^{X_{D,r}}_{g,\vec k,\beta}:=
\int_{[\overline{M}_{g,\vec k,\beta}(X_{D,r})]^{\on{vir}}}\bar{\psi}_1^{a_1}\on{ev}^*_{1}(\gamma_{1})\cdots \bar{\psi}_m^{a_m}\on{ev}^*_{m}(\gamma_{m}),
\end{align}
where the descendant class $\bar{\psi}_i$ is the class pullback from the corresponding descendant class on the moduli space $\overline{M}_{g,m,\beta}(X)$ of stable maps to $X$.

Let 
\begin{align}\label{rel-inv}
\left\langle \prod_{i=1}^m\tau_{a_{i}}(\gamma_{i})\right\rangle^{(X,D)}_{g,\vec k,\beta}
\end{align}
be the corresponding relative Gromov--Witten invariants of $(X,D)$ with contact orders $k_i$ at the $i$-th marking, for $1\leq i\leq m$. When $m_-=0$, invariants (\ref{rel-inv}) are simply the relative Gromov--Witten invariants without negative contact orders defined in \cite{LR}, \cite{IP}, \cite{Li1} and \cite{Li2}. When $m_->0$, invariants (\ref{rel-inv}) are the relative Gromov--Witten invariants with negative contact orders defined in \cite{FWY} and \cite{FWY19}. 

Given a partition $\vec k$, we take a sufficiently large $r$. We consider orbifold invariants of root stack $X_{D,r}$ with ages 
\[
k_1/r,\ldots, k_m/r, k_a/r,k_b/r,
\]
where $k_a,k_b\in \{1,2,\ldots, r-1\}$ and $k_a+k_b=r$. In other words, we consider orbifold invariants with two extra orbifold markings. 
Without loss of generality, we will always have $k_a\leq k_b$ in Section \ref{sec:genus-0} and Section \ref{sec:higher-genus}. 

Orbifold invariants with a pair of extra markings are denoted by
\begin{align}\label{inv-mid-age}
\left\langle \tau_0(\gamma_a)\tau_0(\gamma_b)\prod_{i=1}^m\tau_{a_{i}}(\gamma_{i})\right\rangle^{X_{D,r}}_{g,\vec k,k_a,k_b,\beta}.
\end{align}

\begin{remark}
Note that, we can also choose more than two numbers in $\{1,2\ldots, r-1\}$. For example, we can take $k_a,k_b,k_c\in \{1,2,\ldots, r-1\}$ and $k_a+k_b+k_c=r$. Most of the results of this paper remain the same. In our paper, we only consider invariants with a pair of mid-ages because these are the invariants that appear in the loop axiom of orbifold Gromov--Witten theory. There are also orbifold invariants with more than one pair of mid-ages, e.g.  $k_a,k_b,k_c,k_d\in \{1,2,\ldots, r-1\}$ and $k_a+k_b=r$, $k_c+k_d=r$. We do not consider such invariants, because they will not appear in the loop axiom if we begin with orbifold invariants with only small ages and large ages. Similar results for these invariants can be obtained following the proof of Theorem \ref{thm-genus-zero-mid-age} and Theorem \ref{thm-higher-genus-mid-age}.
\end{remark}

\subsection{Main results}
We consider the forgetful map
\[
\tau: \overline{M}_{g,\vec k, k_a,k_b,\beta}(X_{D,r}) \rightarrow \overline{M}_{g,m+2,\beta}(X)\times_{X^{m_++m_-+2}}D^{m_++m_-+2},
\]
which forgets the orbifold structures. For genus zero invariants, we have

\begin{theorem}\label{thm-genus-zero-mid-age}
The genus zero cycle class
\[
r^{m_-+1}\tau_*\left(\left[\overline{M}_{0,\vec k, k_a,k_b,\beta}(X_{D,r})\right]^{\on{vir}}\right)\in A_*\left(\overline{M}_{0,m+2,\beta}(X)\times_{X^{m_++m_-+2}}D^{m_++m_-+2}\right)
\] 
of the root stack $X_{D,r}$ is constant in $r$ for sufficiently large $r$. Furthermore, there exists a positive integer $d_0$ such that this cycle class does not depend on the pair $(k_a,k_b)$ as long as $d_0<k_a\leq k_b$.
\end{theorem}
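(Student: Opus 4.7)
My plan is to extend the orbifold-to-relative comparison techniques of \cite{ACW}, \cite{TY18}, \cite{TY20}, \cite{FWY}, \cite{FWY19} from small and large ages to mid-ages, and then use the genus-zero degree bound to conclude. Concretely, I would decompose $\tau_*\left[\overline{M}_{0,\vec k,k_a,k_b,\beta}(X_{D,r})\right]^{\on{vir}}$ by applying orbifold Riemann--Roch to the $\mathbb{B}\mu_r$-gerbes attached to the orbifold markings. Each marking of age $c/r$ with $1 \leq c \leq r-1$ contributes an explicit rational function in $r$ to the comparison between the orbifold virtual class on $\overline{M}_{0,\vec k, k_a,k_b,\beta}(X_{D,r})$ and the class pulled back from $\overline{M}_{0,m+2,\beta}(X)\times_{X^{m_++m_-+2}}D^{m_++m_-+2}$.

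For the $r$-dependence, each large-age marking among the $m_-$ standard ones contributes a factor of $1/r$ coming from the automorphism length of its gerbe, absorbed by $m_-$ of the factors of $r$ in the normalization $r^{m_-+1}$. The two mid-age markings, whose ages sum to $1$, should jointly contribute one additional factor of $1/r$ --- geometrically, the pair behaves as a single ``balanced'' negative contact --- which is absorbed by the remaining factor of $r$. What is left is a polynomial in $r$ whose degree is bounded by $\max\{2g-1,0\}=0$ in genus zero, giving the claimed $r$-independence of the pushforward cycle. This is the same polynomiality mechanism as in \cite{TY20} and \cite{FWY19}, but one must verify that the two mid-age markings do not increase the degree bound; in genus zero this reduces to a straightforward degree count.

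For the independence of $(k_a, k_b)$, after extracting the $r$-powers the remaining cycle still depends a priori on $k_a$ through the orbifold Riemann--Roch contributions of the two mid-age gerbes. One route is to compute these contributions explicitly and show the $k_a$-dependent terms cancel once $d_0 < k_a \leq k_b$, with $d_0$ chosen so that neither $k_a$ nor $k_b$ enters a regime where extra small-age-type expansion terms appear. An alternative is a finite induction on $k_a$: using WDVV on $X_{D,r}$ with an auxiliary marking of age $1/r$ relates the invariants for $(k_a,k_b)$ and $(k_a+1,k_b-1)$, and the correction boundary terms must be shown to vanish modulo $r$-polynomials of positive degree, which are zero in genus zero.

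\textbf{The main obstacle} I expect is the $k_a$-independence. Small and large contact orders correspond to genuine geometric contacts with $D$, which fixes their dependence, but mid-ages are intrinsic to the orbifold $X_{D,r}$ and have no such interpretation, so the $k_a$-independence is not geometrically manifest. The direct orbifold Riemann--Roch route is the most reliable but requires a careful cancellation argument; the WDVV alternative is cleaner but requires tracking all boundary contributions. In either case, genus zero is the favorable regime because no nontrivial $\bar\psi$-class appears at the mid-age markings.
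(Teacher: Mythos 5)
The paper's proof of this theorem is a degeneration-plus-localization argument: first degenerate to the local model $(Y_{D_0,r},D_\infty)$ with $Y=\P^1(\cO_D\oplus L)$, then apply $\C^*$-localization and examine the decorated-graph contributions. The key technical step, which your proposal does not identify, is Lemma \ref{lemma-d-0}: for $r$ large and $d_0<k_a\leq k_b$, the two mid-age markings are \emph{forced onto the same contracted vertex over $\mathcal D_0$} (by a congruence-mod-$r$ counting argument), so that the set of contributing decorated graphs is literally the same for all such pairs $(k_a,k_b)$. With that in hand, the residue at $t^0$ kills every configuration except the one where the single stable vertex is the mid-age vertex, and the pushforward of that vertex contribution is $\frac{1}{r}[\overline{M}_{0,m+l(\mu)+2,\pi_*\beta}(D)]^{\on{vir}}$ by \cite{AJT15}, which is manifestly independent of $(k_a,k_b)$ and, after multiplying by $r$, of $r$. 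Your proposal, by contrast, suggests decomposing the comparison between $\tau_*[\overline{M}_{0,\vec k,k_a,k_b,\beta}(X_{D,r})]^{\on{vir}}$ and the coarse moduli class into a product of ``orbifold Riemann--Roch'' contributions attached locally at the orbifold markings. There is no such local decomposition: the $r$- and $k_a$-dependence of the virtual class is a global phenomenon visible only after reducing to the $\P^1$-bundle and localizing, and the per-marking ``$1/r$ from the gerbe automorphism'' heuristic does not account for the structure of the residues. Indeed, the extra $1/r$ for the mid-age pair comes from one fewer $(r/t)$ factor in the localization residue of the mid-age vertex (compare the paper's expressions (\ref{vertex-contri-genus-zero}) and (\ref{vertex-contri-genus-zero-mid})), not from gerbe automorphisms.

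On the $(k_a,k_b)$-independence, which you correctly flag as the main obstacle: your ``explicit cancellation of orbifold RR contributions'' route would not converge to a proof, because the independence is not the result of a cancellation of $k_a$-dependent terms --- it is the result of the decorated graphs and their residues being \emph{identical} once both markings lie on the same vertex, and the genus-zero vertex class over $\mathcal D_0$ being $\frac{1}{r}$ times a class on $\overline{M}_{0,n}(D,\cdot)$ with no $k_a$-dependence at all. The WDVV alternative is also problematic: adding an auxiliary age-$1/r$ marking changes the total age balance, so there is no natural WDVV move shifting $(k_a,k_b)\mapsto(k_a+1,k_b-1)$ while preserving the other orbifold data, and the argument that boundary terms are ``$r$-polynomials of positive degree, which are zero in genus zero'' presupposes the very polynomiality you are trying to establish. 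Finally, the proposal omits the case split $m_-=0$ versus $m_->0$: the latter requires passing to equivariant Chow (Lemma \ref{lemma-equivariant-genus-zero} and Lemma \ref{lemma-cycle}) to control the extra $r^{m_-}$ normalization, and this is not a cosmetic point --- it is where the citation to \cite{FWY}*{Lemma A.1} does real work.
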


For higher genus invariants, we have

\begin{theorem}\label{thm-higher-genus-mid-age}
The genus $g$ cycle class
\[
\tau_*\left(\left[\overline{M}_{g,\vec k, k_a,k_b,\beta}(X_{D,r})\right]^{\on{vir}}\right)\in A_*\left(\overline{M}_{g,m+2,\beta}(X)\times_{X^{m_++m_-+2}}D^{m_++m_-+2}\right)
\]
is a polynomial in $k_a$. Furthermore, the cycle class
\[
r^{m_-+1}\tau_*\left(\left[\overline{M}_{g,\vec k, k_a,k_b,\beta}(X_{D,r})\right]^{\on{vir}}_{k_a^i}\right)\in A_*\left(\overline{M}_{g,m+2,\beta}(X)\times_{X^{m_++m_-+2}}D^{m_++m_-+2}\right)
\]  
of the root stack $X_{D,r}$ is a polynomial in $r$ with degree bounded by $2g$ when $r$ is sufficiently large, where $[\cdots]_{k_a^i}$ is the $k_a^i$-coefficient of the polynomial in $k_a$.
\end{theorem}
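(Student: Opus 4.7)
The plan is to combine the virtual localization and degeneration techniques of \cites{ACW, TY18, TY20, FWY, FWY19} with the genus zero case (Theorem \ref{thm-genus-zero-mid-age}) to handle the two mid-age markings. I would begin by applying the degeneration of $X_{D,r}$ to the normal cone of $D$, which splits the target into $X_{D,r}$ glued along $D$ to $Y_{D_\infty,r}$, where $Y=\P(N_D\oplus\cO)$. The orbifold degeneration formula then expresses the invariant (\ref{inv-mid-age}) as a sum over bipartitions of markings, distributions of the curve class, and contact orders at the gluing divisor.

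I would sort the summands according to which side of the degeneration the two mid-age markings with ages $k_a/r$ and $(r-k_a)/r$ land on. When both lie on the $X_{D,r}$ side, the contribution reduces to an invariant on $X_{D,r}$ of the same mid-age type glued to an invariant on $Y_{D_\infty,r}$ of strictly smaller complexity, amenable to induction. When the mid-ages are split between the two sides, the $Y_{D_\infty,r}$ factor is itself a mid-age invariant on a $\P^1$-bundle, which I would then evaluate via $\mathbb{C}^*$-localization along the fibers of $Y \to D$; the fixed loci are products of rubber moduli spaces and ordinary moduli of stable maps to $D$, and the integrand factors into pieces whose $k_a$-dependence enters through equivariant weights at the twisted sectors.

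For polynomiality of the pushforward cycle in $k_a$, I would induct on genus and on the total number of markings, with Theorem \ref{thm-genus-zero-mid-age} serving as the base case. Each summand in the degeneration decomposition yields either a lower-complexity mid-age invariant or an invariant with only small and large ages multiplied by a rational function of $k_a$ coming from equivariant integrals on rubber moduli. Individually these rational functions can have apparent poles (arising from the constraint $k_a+k_b=r$ together with tangent-weight denominators in localization), but the genus zero base case and the induction hypothesis force the poles to cancel in the total, leaving a polynomial in $k_a$.

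For the bound $2g$ on the degree in $r$ of each $k_a^i$-coefficient, I would invoke the known polynomiality results of \cites{ACW, TY18, TY20, FWY, FWY19} for invariants involving only small and large ages, which give polynomials in $r$ of degree at most $\max\{2g-1,0\}$. The presence of the mid-age markings permits at most one additional power of $r$ through the orbifold $\bar{\psi}$-class normalization at the rubber insertions, yielding the bound $2g$; the prefactor $r^{m_-+1}$ accounts for the $m_-$ large-age markings together with the large mid-age marking at $(r-k_a)/r$, in line with the negative contact order framework of \cite{FWY19}. The main obstacle will be controlling the simultaneous $(k_a,r)$-dependence across degeneration strata and verifying the cancellation of apparent $k_a$-poles, since individual strata need not be polynomial in $k_a$ even when their sum is; this requires a careful bookkeeping argument in parallel with the $r$-polynomiality proofs of \cites{TY20, FWY19}.
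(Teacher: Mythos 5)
Your proposal has the right broad framework---degeneration to the normal cone followed by $\mathbb{C}^*$-localization on the $\mathbb{P}^1$-bundle local model---but it misses the central mechanism of the argument and proposes a route that does not clearly close.

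On the degeneration step: after degenerating $X_{D,r}$ to the normal cone of $D$, all of the root-stack structure lives on the $\mathbb{P}^1$-bundle side $Y_{D_0,r}$, so every twisted marking, in particular both mid-age markings, must be distributed there. There is no case analysis on ``which side the mid-ages land on.'' The paper then works on $(Y_{D_0,r}, D_\infty)$ directly and proves Lemma~\ref{lemma-d-0}: for $d_0 < k_a \leq k_b$ and $r$ sufficiently large, the two mid-age markings are forced into the \emph{same} contracted vertex over the zero section. You never prove (or invoke) anything equivalent, and without it the enumeration of localization graphs does not close.

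More importantly, you attribute the $k_a$-dependence to ``equivariant weights at the twisted sectors'' and then propose that apparent $k_a$-poles cancel after summing over degeneration strata, with genus zero as an inductive base. That is not where the $k_a$-dependence lives, and no cancellation is needed. In the paper's argument the $k_a$-dependence enters \emph{only} through the Hodge-type classes $c_i(-R^*\pi_*\mathcal L)$ at the vertex $v_0$ containing the mid-ages, and Chiodo's Grothendieck--Riemann--Roch formula (as in \cite{JPPZ18} and \cite{TY20}*{Lemma~2}) shows directly that the pushforward of $c_i(-R^*\pi_*\mathcal L)$ is polynomial in $k_a$, with $k_a^i$-coefficient a polynomial in $r$ of degree $\leq 2i$; the bound $2g$ then falls out of taking the $t^0$-coefficient and summing genera over vertices. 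Your induction-plus-cancellation scheme never identifies this input, and I don't see how it would establish polynomiality in $k_a$: having individual strata be non-polynomial in $k_a$ while their total is polynomial is a much harder phenomenon to control, and the assertion that the genus zero base case ``forces the poles to cancel'' is unjustified. Finally, for $m_->0$ the paper needs an equivariant version of the cycle statement (Lemma~\ref{lemma-higher-genus-equiv} together with Lemma~\ref{lemma-cycle-higher}, following \cite{FWY}*{Appendix~A}) to handle terms with negative powers of $t$; your proposal does not address this.
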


\begin{remark}
We only talk about a bound for the degree. There maybe better bounds. Since we are mostly interested in taking the constant term of the polynomial, we do not plan to study the precise upper bound of the degree.  At the level of invariants, since insertions provide extra constraints, there maybe better bounds. Such examples include stationary invariants of target curves considered in \cite{TY18}*{Theorem 1.9}. See also Example \ref{example-stationary}.
\end{remark}

\begin{remark}
Following \cite{FWY19}*{Theorem 3.13} and \cite{TY20}*{Theorem 1.1}, given $k_a$ (hence $k_b$ is also given), for sufficiently large $r$, the invariant (\ref{inv-mid-age}) is a polynomial in $r$ with degree bounded by $2g-1$ and the constant term is the corresponding relative invariant with negative contact orders. However, if we also increase $k_a$ as $r$ increases, we only know the degree of the polynomial is bounded by $2g$. Therefore, Theorem \ref{thm-higher-genus-mid-age} and the loop axiom for orbifold Gromov--Witten theory provide another explanation for the degree bound of genus $(g+1)$ orbifold invariants. In particular, Theorem \ref{thm-genus-zero-mid-age} shows that genus zero invariants are constant in $r$. Together with the loop axiom for genus one invariants, we have another explanation why genus one invariants are linear functions in $r$.  
\end{remark}

\begin{remark}
The difference between genus zero invariants and higher genus invariants can also be seen from the viewpoint of double ramification cycles. Genus zero double ramification cycle is simply the identity class, hence does not depend on $\vec k$, $k_a$ or $k_b$. While higher genus double ramification cycles are polynomials in $\vec k$, $k_a$ and $k_b$ by \cite{JPPZ}*{Appendix A.4}. Since invariants in Theorem \ref{thm-higher-genus-mid-age} are closely related to double ramification cycles, we do not expect they are independent of the pair $(k_a,k_b)$. 
\end{remark}

The relationship between relative and orbifold invariants can be used to state a modified version of the loop axiom for relative Gromov--Witten theory. In particular, the genus zero result in Theorem \ref{thm-genus-zero-mid-age} implies the loop axiom for genus one relative Gromov--Witten theory.

Let $\HH=\bigoplus\limits_{i\in\mathbb Z}\HH_i,$ be the ring of insertions of relative Gromov--Witten theory. Let $\{[e_i]\}$ be a basis of $\HH$, $\eta_{jk}=\langle [e_j],[e_k]\rangle$ and $(\eta^{jk})=(\eta_{jk})^{-1}$. Consider the morphism
  \[
    \rho_l: \overline{M}_{g,m+2}\rightarrow \overline{M}_{g+1,m}
    \]
    obtained by identifying the last two markings of the $(m+2)$-pointed, genus $g$ curves. Then the following modified loop axiom holds.

\begin{theorem}\label{thm-genus-one-loop-axiom}
Genus one relative Gromov--Witten theory satisfies the following identity which can be viewed as a modified version of the usual loop axiom:
\begin{align*}
&\rho_l^*\Omega^{(X,D)}_{1,m}([\gamma_1],\ldots,[\gamma_m])\\
=&\sum_{j,k: [e_j]\in \HH_{k_a}, k_a\leq d_0 \text{ or } k_a\geq r-d_0}\eta^{jk}\Omega^{(X,D)}_{0,m+2}([\gamma_1],\ldots,[\gamma_m],[e_j],[e_k])\\
&-(2d_0+1)\left[r^{m_-}\rho_l^*\Omega^{X_{D,r}}_{1,m}(\gamma_1,\ldots,\gamma_m)\right]_{r^1}.
\end{align*}
for all sufficiently large integers $d_0$, where $\Omega^{(X,D)}_{g,m}$ is the relative Gromov--Witten class defined in Definition \ref{defn-relative-GW-class}.
\end{theorem}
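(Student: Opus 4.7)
The plan is to derive the modified loop axiom from the \emph{honest} loop axiom of the orbifold theory on $X_{D,r}$. Since orbifold Gromov--Witten theory of a smooth Deligne--Mumford stack is a full CohFT, the standard loop axiom gives
\[
\rho_l^*\Omega^{X_{D,r}}_{1,m}(\gamma_1,\ldots,\gamma_m)
= \sum_{k_a=0}^{r-1}\sum_{j,k}\tilde\eta^{jk}\,\Omega^{X_{D,r}}_{0,m+2}\bigl(\gamma_1,\ldots,\gamma_m,[e_j]_{k_a/r},[e_k]_{(r-k_a)/r}\bigr),
\]
where $\tilde\eta$ is the Chen--Ruan pairing; on the age $k_a/r$ sector (for $1\le k_a\le r-1$) it carries an intrinsic factor of $r$ relative to the classical pairing $\eta$ on $D$. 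I would multiply this identity by $r^{m_-}$ and split the outer sum over $k_a$ into the \emph{non-mid} range $\{0,\ldots,d_0\}\cup\{r-d_0,\ldots,r-1\}$ and the \emph{mid} range $d_0<k_a<r-d_0$.

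For the non-mid range, the orbifold--relative correspondence for genus zero proved in \cite{ACW}, \cite{TY18}, \cite{FWY} and \cite{FWY19} asserts that $r^{m_-+1}\Omega^{X_{D,r}}_{0,m+2}$ equals the corresponding $\Omega^{(X,D)}_{0,m+2}$, a quantity constant in $r$ for $r$ sufficiently large. Combined with the extra factor of $r$ hidden in $\tilde\eta$, this reproduces exactly the relative genus-zero sum appearing on the right-hand side of the theorem, with index set precisely ``$k_a\le d_0$ or $k_a\ge r-d_0$.''

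For the mid range, Theorem~\ref{thm-genus-zero-mid-age} tells us that $r^{m_-+1}\tau_*[\overline{M}_{0,\vec k,k_a,k_b,\beta}(X_{D,r})]^{\on{vir}}$ is constant in $r$ and independent of the pair $(k_a,k_b)$, so each of the $r-2d_0-1$ mid-age summands contributes the same quantity $M$; the total mid contribution is therefore $(r-2d_0-1)M$. Combining the two ranges yields
\[
r^{m_-}\rho_l^*\Omega^{X_{D,r}}_{1,m}=\sum_{\text{non-mid}}\eta^{jk}\,\Omega^{(X,D)}_{0,m+2}+(r-2d_0-1)M,
\]
an equation of polynomials in $r$ of degree $\le 1$, the non-mid sum being independent of $r$. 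Taking the $r^0$-coefficient of both sides and invoking the genus-one orbifold--relative correspondence of \cite{FWY19} and \cite{TY20} to identify $[r^{m_-}\rho_l^*\Omega^{X_{D,r}}_{1,m}]_{r^0}=\rho_l^*\Omega^{(X,D)}_{1,m}$, while the $r^1$-coefficient pins down $M=[r^{m_-}\rho_l^*\Omega^{X_{D,r}}_{1,m}]_{r^1}$, produces the claimed identity after substitution.

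The main obstacle I anticipate is the bookkeeping of $r$-factors: the one coming from the Chen--Ruan pairing $\tilde\eta$, the normalization $r^{m_-+1}$ appearing in both the non-mid and the mid orbifold--relative comparisons, and the $r^{m_-}$ in the statement all have to match on the nose. A secondary subtlety is to verify that the $k_a=0$ interior-interior contribution fits correctly into the $\HH_0$-block of the non-mid relative sum; this should be automatic from the classical ($r$-free) part of the genus-zero correspondence, but deserves to be checked separately since $k_a=0$ is formally a boundary case of both ranges.
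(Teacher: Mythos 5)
Your proposal is correct and follows essentially the same route as the paper: start from the honest loop axiom for $X_{D,r}$, split the loop sum into non-mid and mid ranges, apply the orbifold--relative correspondence on the non-mid range and Theorem~\ref{thm-genus-zero-mid-age} on the mid range to see the right-hand side as a degree-one polynomial in $r$, and compare the $r^0$- and $r^1$-coefficients using the genus-one orbifold--relative correspondence. The bookkeeping you flag (the factor of $r$ from the Chen--Ruan pairing on the $\mu_r$-gerbe sectors, the $r^{m_-+1}$ normalization, and the $k_a=0$ interior--interior block) is handled in exactly the same way in the paper, and your worry that these factors ``have to match on the nose'' is resolved precisely as you suggest.
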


In \cite{TY18}, it was showed that orbifold Gromov--Witten invariants of $X_{D,r}$ are polynomials in $r$ for sufficiently large $r$ and the constant terms are the corresponding relative Gromov--Witten invariants of $(X,D)$. In \cite{TY20}, it was proved that the degrees of the polynomials are bounded by $(2g-1)$ for genus $g$ Gromov--Witten invariants and $r^i$-coefficients of the polynomials are lower genus relative Gromov--Witten invariants for $i>0$. Theorem \ref{thm-genus-one-loop-axiom} provides another meaning for the $r$-coefficients of the polynomials for genus one orbifold Gromov--Witten invariants.

The result for higher genus orbifold Gromov--Witten theory in Theorem \ref{thm-higher-genus-mid-age} implies the following identity which can be viewed as a modification of the usual loop axiom for higher genus relative Gromov--Witten theory.
\begin{theorem}\label{thm-higher-genus-loop-axiom}
Higher genus relative Gromov--Witten theory of $(X,D)$ satisfies the following identity,  for sufficiently large integer $d_0$,
\begin{align*}
&\rho_l^*\Omega^{(X,D)}_{g+1,m}([\gamma_1],\ldots,[\gamma_m])\\
=&\sum_{j,k: [e_j]\in \HH_{k_a}, k_a\leq d_0 \text{ or }k_a\geq r-d_0}\eta^{jk}\Omega^{(X,D)}_{g,m+2}([\gamma_1],\ldots,[\gamma_m],[e_j],[e_k])+C_{d_0}.
\end{align*}
where $C_{d_0}$, given in Equation (\ref{higher-genus-correction}), is the correction term given by the constant part of the sum of orbifold Gromov--Witten classes with a pair of mid-ages.
\end{theorem}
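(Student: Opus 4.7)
The plan is to deduce Theorem \ref{thm-higher-genus-loop-axiom} from the genuine loop axiom that holds for the orbifold Gromov--Witten theory of $X_{D,r}$ (which is a CohFT), by partitioning the sum over twisted sectors of $\underline{I}X_{D,r}$ according to age, then extracting the constant-in-$r$ part using Theorem \ref{thm-higher-genus-mid-age} and the small/large-age orbifold-to-relative comparisons of \cite{TY18}, \cite{TY20}, \cite{FWY}, and \cite{FWY19}. The strategy parallels the genus-one argument yielding Theorem \ref{thm-genus-one-loop-axiom}, but in higher genus the mid-age contributions no longer vanish when we take constant coefficients in $r$, and this leftover is precisely what $C_{d_0}$ records.

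First, I would write down the orbifold loop axiom for $\rho_l^*\Omega^{X_{D,r}}_{g+1,m}(\gamma_1,\ldots,\gamma_m)$. Using the decomposition $\underline{I}X_{D,r} = X \sqcup \coprod_{k_a=1}^{r-1} D$ and the orbifold Poincaré pairing (which pairs age $k_a/r$ against age $(r-k_a)/r$), the right-hand side splits as a sum indexed by $k_a \in \{0,1,\ldots,r-1\}$ of two-point orbifold contributions. I would then multiply through by the appropriate power $r^{m_-}$ matching the normalization on both sides of the candidate identity, and partition the $k_a$-sum into three ranges: the interior sector $k_a=0$, the small/large-age range $k_a \leq d_0$ or $k_a \geq r-d_0$, and the mid-age range $d_0 < k_a < r - d_0$.

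For the interior and small/large-age ranges, I would invoke the orbifold-to-relative comparison theorems of \cite{TY18}, \cite{TY20}, \cite{FWY}, \cite{FWY19}: after multiplication by the relevant $r^{m_-+1}$ factor, the constant-in-$r$ part of each such summand is the corresponding relative Gromov--Witten class $\Omega^{(X,D)}_{g,m+2}([\gamma_1],\ldots,[\gamma_m],[e_j],[e_k])$ with positive or negative contact orders. Summed over the allowed pairs $(j,k)$ with $[e_j]\in\HH_{k_a}$ and $k_a\leq d_0$ or $k_a\geq r-d_0$, these give the first summand on the right-hand side of the theorem, weighted by the relative pairing $\eta^{jk}$. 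Meanwhile, the constant-in-$r$ part of the full left-hand side $\rho_l^*\Omega^{X_{D,r}}_{g+1,m}$, by the main result of \cite{TY20} combined with the CohFT axioms, equals $\rho_l^*\Omega^{(X,D)}_{g+1,m}([\gamma_1],\ldots,[\gamma_m])$.

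For the mid-age range, Theorem \ref{thm-higher-genus-mid-age} asserts that each term, after multiplication by $r^{m_-+1}$, is a polynomial in $k_a$ whose $k_a^i$-coefficients are polynomials in $r$ of degree at most $2g$. Summing such a polynomial in $k_a$ over $d_0 < k_a < r-d_0$ produces a polynomial in $r$ of degree at most $2g+1$, whose constant coefficient is, by definition, the correction term $C_{d_0}$ appearing in Equation (\ref{higher-genus-correction}). Equating the constant-in-$r$ parts of the two sides of the orbifold loop axiom, the three ranges combine into the asserted modified loop axiom. The main technical obstacle is the bookkeeping of normalizations: one must track the $r^{m_-+1}$ factors carefully across the three ranges, convert between the orbifold pairing on $H^*(\underline{I}X_{D,r})$ and the relative pairing on $\HH$, and verify that the genuine polynomial behavior in both variables $(k_a,r)$ provided by Theorem \ref{thm-higher-genus-mid-age} is indeed what allows the constant coefficient in $r$ to be extracted summand-by-summand without losing cross terms—once this is done carefully, the identity follows by direct rearrangement.
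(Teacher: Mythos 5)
Your proposal follows essentially the same strategy as the paper's proof: invoke the genuine loop axiom for the orbifold CohFT of $X_{D,r}$, partition the sum over twisted sectors by age into the small/large-age range (where the orbifold-to-relative comparisons of \cite{TY18}, \cite{TY20}, \cite{FWY}, \cite{FWY19} convert the constant term in $r$ to relative Gromov--Witten classes) and the mid-age range (where Theorem \ref{thm-higher-genus-mid-age} guarantees polynomiality in $k_a$ and $r$, so that the constant-in-$r$ part of the mid-age sum is a well-defined quantity, which is taken as the definition of $C_{d_0}$ in Equation (\ref{higher-genus-correction})), and then equate $r^0$-coefficients. The paper's proof is terser, but the decomposition, the use of Theorem \ref{thm-higher-genus-mid-age}, and the normalization bookkeeping you describe (including the extra $r$ in the $\mu_r$-gerbe pairing $\eta^{jk}$) are exactly what the paper does.
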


More explicit form of $C_{d_0}$ is also discussed in Section \ref{sec:higher-genus-loop}. We also computed an example at the level of invariants in Example \ref{example-stationary}.

\subsection{Acknowledgement}
The author would like to thank Honglu Fan and Longting Wu for related collaboration and valuable comments on a draft. The author would also like to thank Hsian-Hua Tseng for related collaboration. F. Y. is supported by a postdoctoral fellowship of NSERC and the Department of Mathematical and Statistical Sciences at the University of Alberta.

\section{Genus zero}\label{sec:genus-0}

In this section, we consider the invariant (\ref{inv-mid-age}) with $g=0$. The goal is to prove Theorem \ref{thm-genus-zero-mid-age}.

Let $L=N_D$ be a normal bundle over $D$ in $X$ and $Y$ be the total space of the $\mathbb P^1$-bundle
\[
\pi: \mathbb P^1(\mathcal O_D\oplus L)\rightarrow D.
\]
The zero and infinity divisors of $Y$ are denoted by $D_0$ and $D_\infty$. 
We apply the $r$-th root construction to $D_0$ to obtain the root stack $Y_{D_0,r}$. The zero and infinity divisors of $Y_{D_0,r}$ are denoted by $\mathcal D_0$ and $ D_\infty$. 

We consider the moduli space $\overline{M}_{0,\vec k,k_a,k_b,\beta}(X_{D,r})$ of $(m+2)$-pointed, genus 0, degree $\beta$, orbifold stable maps to $X_{D,r}$ whose orbifold data is given by the partition $\vec k$, $k_a$ and $k_b$.
Using the degeneration formula, it is sufficient to consider 
\[
[\overline{M}_{0,\vec k,,k_a,k_b,\vec \mu,\beta}(Y_{D_0,r}, D_\infty)]^{\on{vir}},
\]
which is the moduli space of orbifold-relative stable maps to $(Y_{D_0,r}, D_\infty)$ with orbifold data given by $\vec k, k_a,k_b$ and relative data given by $\vec \mu$.
This is the standard argument  in \cite{TY18}, \cite{FWY19} and \cite{FWY} of using the degeneration formula to reduce the computation to relative local models. We do not plan to repeat this argument here.

\subsection{Invariants without large ages}
We first consider the case when $m_-=0$. In other words, the original invariant (\ref{orb-inv}) does not contain orbifold markings with large ages. 

There is a natural $\mathbb C^*$-action on $Y$ which induces a natural $\mathbb C^*$-action on the moduli space $\overline{M}_{0,\vec k,,k_a,k_b,\vec \mu,\beta}(Y_{D_0,r}, D_\infty)$. Therefore, it can be computed by the virtual localization formula studied in \cite{JPPZ18}, \cite{TY18} and \cite{FWY19}. We refer readers to \cite{JPPZ18} for details of the virtual localization formula. A component of the domain curve is called contracted if it lands on the zero section $\mathcal D_0$ or the infinity section $D_\infty$. The $\mathbb C^*$-fixed loci of  $\overline{M}_{0,\vec k,,k_a,k_b,\vec \mu,\beta}(Y_{D_0,r}, D_\infty)$ are in bijective correspondence with decorated graphs. Although this section is for genus zero invariants, we briefly describe a decorated graph for genus $g$ invaraints since decorated graphs are also used for genus $g$ invariants in later sections. A decorated graph is a bipartite connected graph with decorations. A decorated graph $\Gamma$ contains the following data and compatibility conditions (see \cite{TY18}*{Section 3.2.1} for more detail).
\begin{itemize}
\item $V(\Gamma)$ is the set of vertices of $\Gamma$. 
Each vertex $v$ is decorated by the genus $g(v)$ and the degree $d(v)\in H_2(D,\mathbb Z)$. The degree $d(v)$ must be an effective curve class.
Vertices are labelled by either $0$ or $\infty$ corresponding to contracted components over the zero divisor or infinity divisor.The labeling map is denoted by 
\[
i:V(\Gamma)\rightarrow \{0,\infty\}.
\] 
\item $E(\Gamma)$ is the set of edges of $\Gamma$. We write $E(v)$ for the set of edges attached to the vertex $v\in V(\Gamma)$ and write $|E(v)|$ for the number of edges attached to the vertex $v\in V(\Gamma)$.
Each edge $e$ is decorated by the degree $d_e\in \mathbb Z_{ >0}$.
\item The set of legs is in bijective correspondence with the set of markings. We write $S(v)$ to denote the set of markings assigned to the vertex $v$.

\item $\Gamma$ is a connected graph, and $\Gamma$ is bipartite with respect to labeling $i$. Each edge is incident to a vertex labeled by $0$ and a vertex labeled by $\infty$.

\end{itemize}



The following lemma is true for genus $g$ invariants and will be used in later sections.
\begin{lemma}\label{lemma-d-0}
For a sufficiently large integer $r$, there exists a positive integer $d_0$ such that when $d_0<k_a\leq k_b$, the two orbifold markings $p_a$ and $p_b$, with ages $k_a/r$ and $k_b/r$ respectively, lie in the same contracted component over $0$.
\end{lemma}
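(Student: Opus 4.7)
The plan is to run a $\mathbb C^*$-localization on the local moduli space $\overline{M}_{0,\vec k, k_a, k_b, \vec\mu, \beta}(Y_{D_0,r}, D_\infty)$ and prove, by a modular-arithmetic balancing argument at the vertex containing $p_a$, that $p_b$ must share the same vertex. First I would observe that since $p_a$ and $p_b$ have ages $k_a/r$ and $k_b/r$ with $k_a,k_b\in\{1,\ldots,r-1\}$, their isotropies are nontrivial, so each must map into the stacky divisor $\mathcal D_0$. In particular, neither can lie on a component contracted to the un-rooted divisor $D_\infty$, nor on an edge (whose only stacky point is its node over $0$). Hence in any fixed locus both $p_a$ and $p_b$ are attached to vertices of $\Gamma$ labeled by $0$.

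Next I would suppose for contradiction that $p_a\in S(v)$ and $p_b\in S(v')$ with $v\neq v'$, both labeled by $0$. The orbifold balancing (representability) condition at $v$ reads
\[
k_a + \sum_{i\in S(v),\, i\neq a} k_i + \sum_{e\in E(v)} d_e \;\equiv\; 0 \pmod{r},
\]
where the $k_i$ for $i\in S(v)\setminus\{a\}$ are the original (possibly negative) contact orders attached to those markings and $d_e$ are the edge degrees. The essential bookkeeping step is to bound the tail
\[
T := \sum_{i\in S(v),\, i\neq a} k_i + \sum_{e\in E(v)} d_e
\]
uniformly in $r$, $k_a$, and the choice of fixed locus: each $|k_i|$ is dominated by $\max_j |k_j|$, a function of the fixed input $\vec k$, while $\sum_{e\in E(v)} d_e\leq \sum_{e\in E(\Gamma)} d_e$ equals the fiber-class component of the curve class of the map to $Y$, which is determined by $\beta$ alone. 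Setting $C:=\sum_{j=1}^m |k_j| + C_\beta$, we obtain $|T|\leq C$.

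The contradiction then follows from a pigeonhole-style modular bound: since $k_a\leq k_b$ we have $k_a\leq r/2$, so for $r>2C$ the integer $k_a+T$ lies in $(-r,r)$ and the congruence above forces $k_a+T=0$, whence $k_a=-T\leq C$. Choosing $d_0:=C$, which depends only on $\vec k$ and $\beta$, the hypothesis $k_a>d_0$ yields the required contradiction, so $p_b\in S(v)$. The same $d_0$ works uniformly over all decorated graphs $\Gamma$ appearing in the localization.

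The main obstacle I anticipate is verifying the uniform bound $\sum_{e\in E(\Gamma)} d_e\leq C_\beta$ independently of $r$; this should be implicit in the localization setup for $(Y_{D_0,r}, D_\infty)$, where each edge is a multi-cover of a $\mathbb P^1$-fiber of $\pi$ and the total fiber-class degree contributed by edges is fixed by $\beta$ through the degeneration formula, as in \cite{TY18} and \cite{FWY19}. Once that bound is in hand, the rest reduces to the modular arithmetic displayed above.
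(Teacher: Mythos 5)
Your overall strategy is the same as the paper's: run localization on $\overline{M}_{0,\vec k, k_a, k_b, \vec\mu, \beta}(Y_{D_0,r}, D_\infty)$, note that $p_a$ and $p_b$ must sit on vertices over $0$, apply the balancing condition at the relevant vertex, and bound the residual terms to force a contradiction when $k_a$ is large. The pigeonhole step at the end is correct: an integer congruent to $0$ mod $r$ and lying strictly inside $(-r,r)$ must be $0$. (You work at the vertex containing $p_a$; the paper works at the one containing $p_b$ and then substitutes $k_b=r-k_a$, but this is cosmetic.)

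However, there is a genuine gap in the balancing condition you write down. The vertex $v$ over $0$ carries a stable map into the gerbe $\mathcal D_0$, which is the $r$-th root gerbe of $D$ along $L=N_D$. The existence of a lift to the gerbe is obstructed by the degree of $f^*L$ modulo $r$, so the correct balancing is of the form
\[
k_a + \sum_{i\in S(v),\, i\neq a} k_i - \sum_{e\in E(v)} d_e - \int_{\beta(v)} c_1(N_D) \;\equiv\; 0 \pmod{r},
\]
with a degree term $\int_{\beta(v)} c_1(N_D)$ that your version drops. Without it, the modular identity you balance against is simply not the one the moduli problem imposes. The fix is easy and is exactly what the paper does: define $b$ to be the maximum of $\left|\int_{\beta'} c_1(N_D)\right|$ over effective summands $\beta'$ of $\pi_*\beta$ (a finite quantity depending only on $\beta$), enlarge your constant to $C := \sum_j |k_j| + C_\beta + b$, and then the same pigeonhole closes the argument. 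You should also be a bit more careful with the sign of $\sum_{e\in E(v)} d_e$ (the edge twist at the node over $0$ is $r-d_e$, not $d_e$), though since you only use $|T|\le C$ that sign does not affect the conclusion.
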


\begin{proof}
We follow the idea of \cite{JPPZ18}*{Lemma 12}. Define $\beta^\prime\in H_2(D,\mathbb Z)$ to be an effective summand of $\pi_*\beta$ if both $\beta^\prime$ and $\pi_*\beta -\beta^\prime$ are effective cycle classes. Let $b$ be the maximum of $\left|\int_{\beta^\prime}c_1(N_D)\right|$ over all effective summands of $\pi_*\beta$. 

Assume
\[
r>4(\sum_{i=1}^m k_i+b).
\]
For the stable vertex over $0$ containing the orbifold marking $p_b$, suppose $p_a$ is not in this contracted component, then the condition
\[
k_b+\sum_{j\in S(v)}k_i- \int_{\beta(v)}c_1(N_D)=\sum_{e\in E(v)}d_e \mod r
\]
holds. By our choice of $r$, we must have $\sum_{e\in E(v)}d_e<r/4$. Recall that $k_a+k_b=r$ and $k_a\leq k_b$. Hence we have
\begin{align}\label{equ-vertex}
-k_a+\sum_{j\in S(v)}k_i- \int_{\beta(v)}c_1(N_D)=\sum_{e\in E(v)}d_e.
\end{align}
Therefore, there exists an integer $d_0<r/2$ such that when $d_0<k_a\leq r/2$, the equality (\ref{equ-vertex}) can not hold. In this case, orbifold markings $p_a$ and $p_b$ lie in the same contracted component.
\end{proof}

Note that the minimal possible value of $d_0$ does not depend on $r$ as long as $r$ is chosen to be sufficiently large. Invariants with $d_0<k_a\leq k_b$ are called invariants with mid-ages. The above lemma implies the following lemma. 

\begin{lemma}\label{lemma-contracted}
There exists a positive integer $d_0$ such that the decorated graphs of the fixed loci of $\overline{M}_{0,\vec k,,k_a,k_b,\vec \mu,\beta}(Y_{D_0,r}, D_\infty)$ are the same for all $d_0<k_a\leq k_b$.
\end{lemma}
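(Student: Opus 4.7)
The plan is to derive Lemma \ref{lemma-contracted} as a direct consequence of Lemma \ref{lemma-d-0} together with a short inspection of the balancing (age-matching) conditions at each vertex over $0$. Choose $d_0$ to be the positive integer produced by Lemma \ref{lemma-d-0}, so that whenever $d_0 < k_a \leq k_b$ the two special orbifold markings $p_a$ and $p_b$ lie on a common contracted vertex $v_0$ over $\mathcal{D}_0$. The goal is then to verify that every piece of decoration making up a graph $\Gamma$ (the set of vertices with their labels $i:V(\Gamma)\to\{0,\infty\}$, the genus and curve-class assignments $g(v),d(v)$, the edge degrees $d_e$, and the distribution of the non-special markings into $S(v)$) is constrained by data that does not involve $k_a$ or $k_b$ individually.

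First I would restate the balancing condition at an arbitrary contracted vertex $v$ over $0$: it takes the form
\[
\sum_{j \in S(v)} k_j - \int_{\beta(v)} c_1(N_D) \equiv \sum_{e \in E(v)} d_e \pmod{r},
\]
where the sum on the left over $S(v)$ now includes the ages of $p_a$ and $p_b$ precisely when $v = v_0$. For $v \neq v_0$ this condition does not see $k_a$ or $k_b$ at all, so admissibility of such $v$ in $\Gamma$ is untouched by the choice of $(k_a,k_b)$. For $v = v_0$ the left-hand side picks up an extra $k_a + k_b$, and since by hypothesis $k_a + k_b = r \equiv 0 \pmod r$, this contribution vanishes modulo $r$. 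Thus the balancing condition at $v_0$ reduces to
\[
\sum_{j \in S(v_0) \setminus \{p_a,p_b\}} k_j - \int_{\beta(v_0)} c_1(N_D) \equiv \sum_{e \in E(v_0)} d_e \pmod{r},
\]
which is manifestly independent of $(k_a, k_b)$.

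Next I would check that no other ingredient of a decorated graph can depend on $(k_a,k_b)$: the total genus is fixed to $0$, the total curve class is fixed to $\beta$, the relative data $\vec\mu$ at $D_\infty$ is fixed, the ages of the remaining orbifold markings are encoded by $\vec k$, and bipartiteness, connectedness and the genus/degree additivity constraints only involve these fixed quantities. The assignment of $p_a$ and $p_b$ to a common contracted vertex over $0$ is forced by Lemma \ref{lemma-d-0}, but the specific choice of that vertex is one of the combinatorial data of $\Gamma$ itself, and its legality is governed by the balancing equation just shown to be $(k_a,k_b)$-independent. Hence the set of admissible decorated graphs is literally the same set of combinatorial objects for every pair $(k_a,k_b)$ in the range $d_0 < k_a \leq k_b$.

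The only delicate point, which I view as the main (minor) obstacle, is to make sure that the reduction in Lemma \ref{lemma-d-0} is applied uniformly in $(k_a,k_b)$ rather than pointwise: the integer $d_0$ there was chosen from the finite data $\vec k$ and $\beta$ and does not grow with $r$, so a single $d_0$ works simultaneously for all sufficiently large $r$ and all pairs $(k_a,k_b)$ with $k_a+k_b=r$ and $d_0 < k_a \leq k_b$. Once that uniformity is recorded, the lemma follows.
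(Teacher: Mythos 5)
Your proposal is correct and follows the same route the paper takes: the paper gives no written proof of this lemma, asserting only that it is implied by Lemma~\ref{lemma-d-0}, and your argument is the natural way to fill in that implication. The observation that the balancing condition at $v_0$ sees only $k_a+k_b=r\equiv 0\pmod r$, so that no graph datum can distinguish two pairs of mid-ages, is precisely the point being implicitly invoked.
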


\begin{proof}[Proof of Theorem \ref{thm-genus-zero-mid-age} with $m_-=0$]

For $k_a\leq d_0$, we can find a sufficiently large $r_0$ such that the genus zero orbifold invariants (\ref{inv-mid-age}) are constant in $r$, for any $r>r_0$, and equal to genus zero relative invariants with negative contact order $-k_a$. 

For such a large $r$, we consider the case when $k_a> d_0$. In other words, we consider genus zero invariants with a pair of mid-ages. 

 The $\mathbb C^*$-fixed locus corresponding to the decorated graph $\Gamma$ is denoted by $\overline{M}_{\Gamma}$. There is a natural morphism
\[
\iota:\overline{M}_{\Gamma}\rightarrow \overline{M}_{g,\vec k,k_a,k_b,\vec \mu,\beta}(Y_{D_0,r},D_\infty).
\]
Following \cite{JPPZ18}, \cite{TY18} and \cite{FWY19}, the localization formula is
\begin{align}\label{localization-genus-zero}
[\overline{M}_{0,\vec k,k_a,k_b,\vec \mu,\beta}(Y_{D_0,r}, D_\infty)]^{\on{vir}}= \qquad \sum_{\Gamma}\frac{1}{|\on{Aut}(\Gamma)|\prod_{e\in E(\Gamma)}d_e} \cdot\iota_*\left(\frac{[\overline{M}_{\Gamma}]^{\on{vir}}}{e(\on{Norm}_{\Gamma}^{\on{vir}})}\right).
\end{align}
The inverse of the virtual normal bundle $\frac{1}{e(\on{Norm}_{\Gamma}^{\on{vir}})}$ can be written as the product of the following factors:
\begin{itemize}
\item for each stable vertex $v$ over the zero section that does not contain mid-ages, there is a factor
\begin{align}\label{vertex-contri-genus-zero}
&\prod_{e\in E(v)}\frac{rd_e}{t+\on{ev}_{e}^*c_1(L)-d_e\bar{\psi}_{(e,v)}}\left(\sum_{i=0}^{\infty}(t/r)^{-1+|E(v)|-i}c_i(-R^*\pi_*\mathcal L)\right)\\
\notag =&\left(\frac rt\right)^{|E(v)|}\prod_{e\in E(v)}\frac{d_e}{1+\frac{\on{ev}_{e}^*c_1(L)-d_e\bar{\psi}_{(e,v)}}{t}}\left(\sum_{i=0}^{\infty}(t/r)^{-1+|E(v)|-i}c_i(-R^*\pi_*\mathcal L)\right)\\
\notag =&\frac rt \prod_{e\in E(v)}\frac{d_e}{1+\frac{\on{ev}_{e}^*c_1(L)-d_e\bar{\psi}_{(e,v)}}{t}}\left(\sum_{i=0}^{\infty}(t/r)^{-i}c_i(-R^*\pi_*\mathcal L)\right),
\end{align}
where 
\[
\pi: \mathcal C_{0,\on{val}(v),\beta(v)}(\mathcal D_r)\rightarrow \overline{M}_{0,\on{val}(v),\beta(v)}(\mathcal D_r)
\]
 is the universal curve, 
\[
\mathcal L\rightarrow \mathcal C_{0,\on{val}(v),\beta(v)}(\mathcal D_r)
\] 
is the universal $r$-th root.
\item for the stable vertex $v_0$ over the zero section containing mid-ages, there is a factor
\begin{align}\label{vertex-contri-genus-zero-mid}
&\prod_{e\in E(v_0)}\frac{rd_e}{t+\on{ev}_{e}^*c_1(L)-d_e\bar{\psi}_{(e,v_0)}}\left(\sum_{i=0}^{\infty}(t/r)^{|E(v_0)|-i}c_i(-R^*\pi_*\mathcal L)\right)\\
\notag =&\left(\frac rt\right)^{|E(v_0)|}\prod_{e\in E(v_0)}\frac{d_e}{1+\frac{\on{ev}_{e}^*c_1(L)-d_e\bar{\psi}_{(e,v_0)}}{t}}\left(\sum_{i=0}^{\infty}(t/r)^{|E(v_0)|-i}c_i(-R^*\pi_*\mathcal L)\right)\\
\notag =& \prod_{e\in E(v_0)}\frac{d_e}{1+\frac{\on{ev}_{e}^*c_1(L)-d_e\bar{\psi}_{(e,v_0)}}{t}}\left(\sum_{i=0}^{\infty}(t/r)^{-i}c_i(-R^*\pi_*\mathcal L)\right);
\end{align}

\item if the target expands over the infinity section, there is a factor
\begin{align}
\frac{\prod_{e\in E(\Gamma)}d_e}{-t-\psi_\infty}=-\frac{1}{t}\frac{\prod_{e\in E(\Gamma)}d_e}{1+\frac{\psi_\infty}{t}}.
\end{align}
\end{itemize}

Now we want to take the coefficient of $t^0$. If we have any stable vertex over zero that does not contain mid-ages or stable vertex over infinity, then we will end up with negative powers of $t$. Therefore, the only stable vertex is the vertex $v_0$ containing mid-ages. We have
\begin{align}\label{fixed-loci-genus-zero}
\overline{M}_{\Gamma}=\overline{M}_{0,\vec k,k_a,k_b,\vec \mu,\pi_*\beta}(\mathcal D_0).
\end{align}
Recall that $\tau$ is the forgetful map that forgets the orbifold structures and we consider the pushforward of (\ref{localization-genus-zero}) by $\tau$. Therefore, we pushforward the virtual cycle  of (\ref{fixed-loci-genus-zero}) to the moduli space of stable maps to $D$. Note that $\mathcal D_0$ is a $\mu_r$-gerbe over $D$. By \cite{AJT15}*{Theorem 4.3}, the pushforward of the virtual cycle of (\ref{fixed-loci-genus-zero}) is simply
\[
\frac{1}{r}[\overline{M}_{0,m+l(\mu)+2,\pi_*\beta}(D)]^{\on{vir}}.
\]

Therefore, after multiplying by $r$, genus zero orbifold invariants with a pair of mid-ages stabilize for sufficiently large $r$ and the value does not depend on the choice of mid-ages. Combining with the degeneration formula, there is a positive integer $d_0$ 
such that, orbifold invariants of $X_{D,r}$ with a pair of mid-ages are constant in $r$ and do not depend on $k_a$ as long as $d_0<k_a\leq k_b$. This proves Theorem \ref{thm-genus-zero-mid-age} when $m_-=0$.
\end{proof}

Following the idea in \cite{FWY}*{Appendix A}, equivariant theory can be considered as a limit of non-equivariant theory. The following equivariant version of the result for $m_-=0$ can be proved by following \cite{FWY}*{Appendix A}.
\begin{lemma}\label{lemma-equivariant-genus-zero}
Genus zero equivariant class
\[
r\tau_*\left(\left[\overline{M}_{0,\vec k, k_a,k_b,\vec \mu,\beta}(Y_{D_0,r},D_\infty)\right]_{\mathbb C^*}^{\on{vir}}\right)
\]  
is constant in $r$ for sufficiently large $r$. Furthermore, there exists a positive integer $d_0$ such that this equivariant class does not depend on the pair $(k_a,k_b)$ as long as $d_0<k_a\leq k_b$.
\end{lemma}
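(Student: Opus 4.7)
The plan is to deduce the equivariant statement from the non-equivariant result for $m_-=0$ proved above, following the strategy of \cite{FWY}*{Appendix A} in which $\mathbb C^*$-equivariant Gromov--Witten theory is realized as a limit of non-equivariant Gromov--Witten theory on an enlarged target. The idea is to replace the $\mathbb C^*$-equivariant parameter $t$ by a geometric class (the hyperplane class on an auxiliary $\mathbb P^N$ with $N$ sufficiently large) and work on a non-equivariant geometry built from $Y_{D_0,r}$ and $\mathbb P^N$, so that the equivariant virtual class for $(Y_{D_0,r},D_\infty)$ becomes a specialization of a non-equivariant virtual class on this enlarged target.

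I would then proceed as follows. First, recall or adapt the compactification from \cite{FWY}*{Appendix A} in which the $\mathbb C^*$-action on $Y_{D_0,r}$ is absorbed into the auxiliary projective factor, and verify that the forgetful map $\tau$ lifts compatibly to the enlarged geometry. Second, apply the $m_-=0$ case of Theorem \ref{thm-genus-zero-mid-age} on this enlarged non-equivariant target to conclude that the corresponding pushforward cycle class, after multiplication by $r$, is constant in $r$ for sufficiently large $r$ and independent of $(k_a,k_b)$ for $d_0<k_a\leq k_b$. Third, transfer this identity back to the equivariant cycle class by extracting coefficients in the auxiliary hyperplane class, using that the pushforward $\tau_*$ is equivariant and commutes with the specialization. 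Lemma \ref{lemma-d-0} and Lemma \ref{lemma-contracted} ensure that in the range $d_0<k_a\leq k_b$ the set of decorated graphs and their combinatorial weights are unchanged, which is what ultimately makes the $(k_a,k_b)$-independence work.

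The main obstacle will be the bookkeeping in the first step: one needs to set up the auxiliary geometry so that every $t$-power in the equivariant localization formula (\ref{localization-genus-zero}) is faithfully captured by hyperplane classes on the non-equivariant target, and to verify that this specialization commutes with $\tau_*$. A direct alternative would be to analyze (\ref{localization-genus-zero}) itself as an identity in $A_*(\text{target})[t,t^{-1}]$ and to show graph-by-graph that, after multiplication by $r$, the $r$-dependence assembles into a $t$-series with $r$-independent coefficients; but this would essentially repeat the combinatorial analysis carried out in the non-equivariant proof, so the \cite{FWY}*{Appendix A} route is cleaner.
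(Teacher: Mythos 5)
Your proposal follows essentially the same route as the paper: both invoke the strategy of \cite{FWY}*{Appendix A}, realizing the $\mathbb{C}^*$-equivariant theory via the Edidin--Graham mixed space $Y_{D_0,r}\times_{\mathbb C^*}E$ (a fibration over $\mathbb{P}^{N-1}$), applying the non-equivariant $m_-=0$ case of Theorem \ref{thm-genus-zero-mid-age} to the enlarged target, and transferring the result back through the identification of equivariant Chow groups with ordinary Chow groups of the quotient. The paper makes this precise via the moduli-space isomorphism (\ref{isom-genus-0-equiv}) together with the matching of perfect obstruction theories, which resolves the ``bookkeeping'' concern you raise in your final paragraph.
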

\begin{proof}
By \cite{EG}*{Section 2.2}, the $i$-th Chow group of a space $M$ under an algebraic group $G$ can be defined as the following. Let $V$ be an $l$-dimensional representation of $G$ and $U\subset V$ be an equivariant open set where $G$ acts freely and whose complement has codimension more than $\dim M-i$. Then the $i$-th Chow group is defined as
\begin{align}\label{equiv-chow}
A_i^G(M)=A_{i+l-\dim G}((M\times U)/G).
\end{align}
For our purpose, we let $G=\mathbb C^*$ and $E=U=\mathbb C^N-\{0\}$, where $N$ is a sufficiently large integer. Then we have that $(M\times E)/\mathbb C^*$ is an $M$-fibration over $B=U/G=\mathbb P^{N-1}$. 

Note that the smooth divisor $D_\infty\subset Y$ induces a smooth divisor
\[
D_\infty\times_{\mathbb C^*} E \subset Y_{D_0,r}\times_{\mathbb C^*}E.
\]
There is the projection
\[
\pi: Y_{D_0,r}\times_{\mathbb C^*}E \rightarrow B.
\]
We consider the genus zero Gromov--Witten theory of $Y_{D_0,r}\times_{\mathbb C^*}E$ relative to $ D_\infty\times_{\mathbb C^*} E$ and choose the curve class $\beta$ such that $\pi_*\beta=0$. We have the isomorphism: 
\begin{align}\label{isom-genus-0-equiv}
\overline{M}_{0,\vec k,k_a,k_b,\vec \mu,\beta}(Y_{D_0,r}\times_{\mathbb C^*}E, D_\infty\times_{\mathbb C^*} E)\cong \left(\overline{M}_{0,\vec k,k_a,k_b,\vec \mu,\beta}(Y_{D_0,r},D_\infty)\times E\right)/\mathbb C^*.
\end{align}
In genus zero, there are natural perfect obstruction theories on both sides and they are identified under this isomorphism. By Theorem \ref{thm-genus-zero-mid-age} with $m_-=0$, the class
\[
r\tau_*\left[\overline{M}_{0,\vec k,k_a,k_b,\vec \mu,\beta}(Y_{D_0,r}\times_{\mathbb C^*}E, D_\infty\times_{\mathbb C^*} E)\right]^{\on{vir}}
\]
is constant in $r$ for sufficiently large $r$ and there exists a positive integer $d_0$ such that this class does not depend on the pair $(k_a,k_b)$ as long as $k_a>d_0$. Therefore, the same statement is true for the right-hand side of (\ref{isom-genus-0-equiv}). By (\ref{equiv-chow}) when $M=\overline{M}_{0,\vec k, k_a,k_b,\vec \mu,\beta}(Y_{D_0,r},D_\infty)$, for $N$ sufficiently large, the Chow group of the right-hand side of (\ref{isom-genus-0-equiv}) is isomorphic to equivariant Chow groups of the moduli space
$\overline{M}_{0,\vec k, k_a,k_b,\vec \mu,\beta}(Y_{D_0,r},D_\infty)$.
Under this identification, by the construction of the virtual class, the virtual classes of (\ref{isom-genus-0-equiv}) is identified with the equivariant virtual class of 
$\overline{M}_{0,\vec k, k_a,k_b,\vec \mu,\beta}(Y_{D_0,r},D_\infty)$.
This concludes the proof.
\end{proof}
\subsection{Invariants with large ages}

In this section, we prove Theorem \ref{thm-genus-zero-mid-age} when $m_->0$. The proof uses the result for $m_-=0$ in the previous section.

The following result for cycle class holds because of Lemma \ref{lemma-equivariant-genus-zero} and the proof of \cite{FWY}*{Lemma A.1}.
\begin{lemma}\label{lemma-cycle}
Given any partition $\vec k$ of $\int_\beta [D_0]$. For any positive integer $j$ and $r\gg 1$, the following class
\[
r^{j+1}(\tau)_*\left(c_j(-R^*\pi_*\mathcal L)\cap \left[\overline{M}_{0,\vec k, k_a,k_b,\beta}(\mathcal D_0)\right]^{vir}\right)
\]
is constant in $r$. Furthermore, there exists a positive integer $d_0$ such that this cycle class does not depend on the pair $(k_a,k_b)$ as long as $d_0<k_a\leq k_b$. 
\end{lemma}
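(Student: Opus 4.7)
The plan is to deduce Lemma \ref{lemma-cycle} from the equivariant statement of Lemma \ref{lemma-equivariant-genus-zero} by a specialization and coefficient-extraction argument in the equivariant parameter $t$, following the strategy of \cite{FWY}*{Appendix A}. The idea is that the non-equivariant class decorated by $c_j(-R^*\pi_*\mathcal L)$ appears as the $t^{-j}$-coefficient of a localization contribution for a carefully chosen specialization of Lemma \ref{lemma-equivariant-genus-zero}.

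First I would specialize Lemma \ref{lemma-equivariant-genus-zero} to the case of trivial relative partition $\vec\mu=\emptyset$ along $D_\infty$ and curve class $\beta\in H_2(D,\mathbb Z)\hookrightarrow H_2(Y_{D_0,r},\mathbb Z)$ pushed forward from the zero section, with $\vec k,k_a,k_b$ satisfying the constraint $\sum k_i+k_a+k_b=\int_\beta[\mathcal D_0] \pmod r$. Next I would analyze the $\mathbb C^*$-fixed loci of $\overline{M}_{0,\vec k,k_a,k_b,\emptyset,\beta}(Y_{D_0,r},D_\infty)$: since $\vec\mu=\emptyset$ and the curve class has no fiber component, any edge in a decorated graph would require strictly positive fiber degree, which is impossible, and any component over $D_\infty$ would demand matching contact orders from edges. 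Hence the only fixed locus is the single-vertex graph $\Gamma_0$ with all legs attached to one contracted vertex over $\mathcal D_0$, and the fixed locus is exactly $\overline{M}_{0,\vec k,k_a,k_b,\beta}(\mathcal D_0)$.

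Applying the localization formula (\ref{localization-genus-zero}) to this unique graph and using the vertex contribution (\ref{vertex-contri-genus-zero-mid}) with $|E(v_0)|=0$, the vertex factor collapses to $\sum_{i\ge 0}(t/r)^{-i}c_i(-R^*\pi_*\mathcal L)$, so
\[
[\overline{M}_{0,\vec k,k_a,k_b,\emptyset,\beta}(Y_{D_0,r},D_\infty)]^{\on{vir}}_{\mathbb C^*}=\left(\sum_{i\ge 0}(t/r)^{-i}c_i(-R^*\pi_*\mathcal L)\right)\cap\left[\overline{M}_{0,\vec k,k_a,k_b,\beta}(\mathcal D_0)\right]^{\on{vir}}.
\]
Multiplying by $r$, pushing forward along $\tau$, and rewriting $r\cdot (t/r)^{-j}=r^{j+1}t^{-j}$ gives
\[
r\cdot\tau_*[\overline{M}_{0,\vec k,k_a,k_b,\emptyset,\beta}(Y_{D_0,r},D_\infty)]^{\on{vir}}_{\mathbb C^*}=\sum_{j\ge 0}t^{-j}\cdot r^{j+1}\tau_*\!\left(c_j(-R^*\pi_*\mathcal L)\cap[\overline{M}_{0,\vec k,k_a,k_b,\beta}(\mathcal D_0)]^{\on{vir}}\right).
\]
By Lemma \ref{lemma-equivariant-genus-zero}, the left-hand side is constant in $r$ for $r\gg 1$ and, for some $d_0$, independent of $(k_a,k_b)$ when $d_0<k_a\le k_b$. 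Comparing coefficients of $t^{-j}$ on the right gives Lemma \ref{lemma-cycle}.

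The main obstacle I expect is Step 2, the fixed-locus analysis for the specialized moduli space: I need to rule out stray fixed loci coming from rubber bubbles over $D_\infty$ with zero net fiber class, and to check that for the chosen $\vec k,k_a,k_b,\beta$ the specialized moduli space is genuinely nonempty and carries the expected perfect obstruction theory that restricts to that of $\overline{M}_{0,\vec k,k_a,k_b,\beta}(\mathcal D_0)$ on $\Gamma_0$. A secondary bookkeeping point is matching the pushforward conventions under $\tau$ (which involve a $1/r$ gerbe-degree factor absorbed by the $\mathcal D_0\to D$ map) with the explicit $r^{j+1}$ prefactor claimed; these together account for the equivariant statement being normalized by a single $r$ on the left while generating an $r^{j+1}$ per coefficient on the right.
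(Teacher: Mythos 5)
Your proof is correct and follows essentially the same route as the paper: both deduce the lemma from Lemma \ref{lemma-equivariant-genus-zero} by localization on the relative-orbifold moduli of $(Y_{D_0,r},D_\infty)$, reading off the factor $\sum_{j\ge 0}(t/r)^{-j}c_j(-R^*\pi_*\mathcal L)$ from the vertex contribution (\ref{vertex-contri-genus-zero-mid}) and extracting $t^{-j}$-coefficients. The one genuine added value in your write-up is that you make the residue-isolation step concrete: by taking $\vec\mu=\emptyset$ and $\beta$ pushed forward from the zero section (so $\beta\cdot D_\infty=0$ and no edge can carry positive fiber degree), the localization has a unique fixed-locus graph, whose fixed locus is literally $\overline{M}_{0,\vec k,k_a,k_b,\beta}(\mathcal D_0)$, so no test-class or residue-extraction argument à la \cite{FWY}*{Lemma A.1} is needed to separate one graph's contribution from the others — this is exactly what the paper's terse phrase ``taking localization residues'' with reference to \cite{FWY}*{Lemma A.1} is standing in for. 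Two small points to tighten: (i) the localization identity should carry the pushforward $\iota_*$ along the inclusion of the fixed locus (and the compatibility of the two forgetful maps $\tau$ used in Lemma \ref{lemma-equivariant-genus-zero} and in the statement of Lemma \ref{lemma-cycle} under $\iota$ should be stated), and (ii) the congruence should be written $\sum k_i + k_a + k_b\equiv \int_\beta[D_0]\pmod r$ (equivalently $\sum k_i=\int_\beta[D_0]$ with $k_a+k_b=r$), not in terms of $\int_\beta[\mathcal D_0]$, which differs by a factor of $r$. Neither affects the argument.
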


\begin{proof}
We prove it by taking localization residues of $\overline{M}_{0,\vec k, k_a,k_b,\vec \mu,\beta}(Y_{D_0,r},D_\infty)$. We consider the decorated graph with one vertex over the zero divisor such that ages for markings and edges are given by $\vec k, k_a, k_b$ and $\vec u$. By Lemma \ref{lemma-equivariant-genus-zero}, the cycle 
\[
r\tau_*\left(\sum_{j=0}^{\infty}(t/r)^{-j}c_j(-R^*\pi_*\mathcal L)\right),
\]
coming from the localization residue computed in (\ref{vertex-contri-genus-zero-mid}) is constant in $r$ for sufficiently large $r$ and there exists a positive integer $d_0$ such that this cycle does not depend on the pair $(k_a,k_b)$ as long as $d_0<k_a\leq k_b$. Therefore, the lemma holds.
\end{proof}

\begin{proof}[Proof of Theorem \ref{thm-genus-zero-mid-age} with $m_-\neq 0$.]
Now we turn to the proof of Theorem \ref{thm-genus-zero-mid-age} for invariants with large ages. Again, we use the localization formula (\ref{localization-genus-zero}). Similar to the proof of Lemma \ref{lemma-d-0},
there exists a positive integer $d_0$ such that when $d_0<k_a\leq k_b$, the two orbifold markings $p_a$ and $p_b$ lie in the same contracted component over the zero divisor.

The localization contributions are

\begin{itemize}
\item for each stable vertex $v$ over the zero section that does not contain mid-ages, there is a factor
\begin{align}
&\prod_{e\in E(v)}\frac{rd_e}{t+\on{ev}_{e}^*c_1(L)-d_e\bar{\psi}_{(e,v)}}\left(\sum_{i=0}^{\infty}(t/r)^{-1+|E(v)|+m_-(v)-i}c_i(-R^*\pi_*\mathcal L)\right)\\
\notag =&\frac rt \prod_{e\in E(v)}\frac{d_e}{1+\frac{\on{ev}_{e}^*c_1(L)-d_e\bar{\psi}_{(e,v)}}{t}}\left(\sum_{i=0}^{\infty}(t/r)^{m_-(v)-i}c_i(-R^*\pi_*\mathcal L)\right);
\end{align}
\item for the stable vertex $v_0$ over the zero section containing mid-ages, there is a factor
\begin{align}
&\prod_{e\in E(v_0)}\frac{rd_e}{t+\on{ev}_{e}^*c_1(L)-d_e\bar{\psi}_{(e,v_0)}}\left(\sum_{i=0}^{\infty}(t/r)^{|E(v_0)|+m_-(v_0)-i}c_i(-R^*\pi_*\mathcal L)\right)\\
\notag =& \prod_{e\in E(v_0)}\frac{d_e}{1+\frac{\on{ev}_{e}^*c_1(L)-d_e\bar{\psi}_{(e,v_0)}}{t}}\left(\sum_{i=0}^{\infty}(t/r)^{m_-(v_0)-i}c_i(-R^*\pi_*\mathcal L)\right);
\end{align}

\item if the target expands over the infinity section, there is a factor
\begin{align}
\frac{\prod_{e\in E(\Gamma)}d_e}{-t-\psi_\infty}=-\frac{1}{t}\frac{\prod_{e\in E(\Gamma)}d_e}{1+\frac{\psi_\infty}{t}}.
\end{align}
\end{itemize}

Similar to the case with $m_-=0$, we take the coefficient of $t^0$. Then Lemma \ref{lemma-cycle} and \cite{FWY}*{Lemma A.1} together imply that the class
\[
r^{m_-+1}\tau_*\left(\left[\overline{M}_{0,\vec k, k_a,k_b,\beta}(X_{D,r})\right]^{\on{vir}}\right)
\] 
is constant in $r$ and independent of $k_a$ and $k_b$ when $d_0<k_a\leq k_b$. This completes the proof of Theorem \ref{thm-genus-zero-mid-age}.
\end{proof}

\section{Higher genus}\label{sec:higher-genus}

In this section, we consider higher genus orbifold invariants of $X_{D,r}$ with a pair of mid-ages. We will prove Theorem \ref{thm-higher-genus-mid-age} which states that genus $g$ orbifold invariants with a pair of mid-ages are polynomials in $k_a$ and the $k_a^i$-coefficients are polynomials in $r$ with degree bounded by $2g$ for sufficiently large $r$. In particular, this implies that genus zero invariants are constant in $r$.

\subsection{Invariants without large ages}

Similar to the genus zero case, we first consider invariants without large ages. In other words, $m_-=0$. 

\begin{proof}[Proof of Theorem \ref{thm-higher-genus-mid-age} with $m_-=0$.]
Let $d_0$ be the positive integer in Lemma \ref{lemma-d-0}. When $k_a\leq d_0$, by \cite{TY18}, \cite{FWY19} and \cite{TY20}, for sufficiently large $r$, the cycle
\[
r\cdot \tau_*\left(\left[\overline{M}_{g,\vec k, k_a,k_b,\beta}(X_{D,r})\right]^{\on{vir}}\right)
\]
is a polynomial in $r$ with degree bounded by $2g-1$ and the constant term is the corresponding relative Gromov--Witten cycle with one negative contact order.

We consider orbifold invariants with a pair $(k_a,k_b)$ of mid-ages such that $k_a>d_0$. By Lemma \ref{lemma-d-0}, two extra orbifold markings $p_a$ and $p_b$ are in the same contracted component. The localization formula is

\begin{align}
[\overline{M}_{g,\vec k,k_a,k_b,\vec \mu,\beta}(Y_{D_0,r}, D_\infty)]^{\on{vir}}= \qquad \sum_{\Gamma}\frac{1}{|\on{Aut}(\Gamma)|\prod_{e\in E(\Gamma)}d_e} \cdot\iota_*\left(\frac{[\overline{M}_{\Gamma}]^{\on{vir}}}{e(\on{Norm}_{\Gamma}^{\on{vir}})}\right).
\end{align}
The inverse of the virtual normal bundle $\frac{1}{e(\on{Norm}_{\Gamma}^{\on{vir}})}$ can be written as the product of the following factors:
\begin{itemize}
\item for each stable vertex $v$ over the zero section that does not contain mid-ages, there is a factor
\begin{align}
&\prod_{e\in E(v)}\frac{rd_e}{t+\on{ev}_{e}^*c_1(L)-d_e\bar{\psi}_{(e,v)}}\left(\sum_{i=0}^{\infty}(t/r)^{-1+g(v)+|E(v)|-i}c_i(-R^*\pi_*\mathcal L)\right)\\
\notag =&\frac rt \prod_{e\in E(v)}\frac{d_e}{1+\frac{\on{ev}_{e}^*c_1(L)-d_e\bar{\psi}_{(e,v)}}{t}}\left(\sum_{i=0}^{\infty}(t/r)^{-i+g(v)}c_i(-R^*\pi_*\mathcal L)\right)\\
\notag =&\frac 1t \prod_{e\in E(v)}\frac{d_e}{1+\frac{\on{ev}_{e}^*c_1(L)-d_e\bar{\psi}_{(e,v)}}{t}}\left(\sum_{i=0}^{\infty}(tr)^{-i+g(v)}r^{2i-2g(v)+1}c_i(-R^*\pi_*\mathcal L)\right),
\end{align}
\item For the stable vertex $v_0$ over the zero section containing mid-ages, there is a factor
\begin{align}
&\prod_{e\in E(v_0)}\frac{rd_e}{t+\on{ev}_{e}^*c_1(L)-d_e\bar{\psi}_{(e,v_0)}}\left(\sum_{i=0}^{\infty}(t/r)^{g(v_0)+|E(v_0)|-i}c_i(-R^*\pi_*\mathcal L)\right)\\
\notag =& \prod_{e\in E(v_0)}\frac{d_e}{1+\frac{\on{ev}_{e}^*c_1(L)-d_e\bar{\psi}_{(e,v_0)}}{t}}\left(\sum_{i=0}^{\infty}(t/r)^{-i+g(v_0)}c_i(-R^*\pi_*\mathcal L)\right)\\
\notag =& \frac{1}{r}\prod_{e\in E(v_0)}\frac{d_e}{1+\frac{\on{ev}_{e}^*c_1(L)-d_e\bar{\psi}_{(e,v_0)}}{t}}\left(\sum_{i=0}^{\infty}(tr)^{-i+g(v_0)}r^{2i-2g(v_0)+1}c_i(-R^*\pi_*\mathcal L)\right).
\end{align}

\item If the target expands over the infinity section, there is a factor
\begin{align}
\frac{\prod_{e\in E(\Gamma)}d_e}{-t-\psi_\infty}=-\frac{1}{t}\frac{\prod_{e\in E(\Gamma)}d_e}{1+\frac{\psi_\infty}{t}}.
\end{align}
\end{itemize}

By the same computation in \cite{JPPZ18}*{Section 2.3 and 2.4} and \cite{TY20}*{Lemma 2} using the Grothendieck–Riemann–Roch formula, the class
\[
r^{2i-2g(v)+1}(\tau)_*\left(c_i(-R^*\pi_*\mathcal L)\right)
\]
is a polynomial in $r$ with degree bounded by $2i$ for $v\neq v_0$. For $v=v_0$, the Grothendieck–Riemann–Roch formula again implies that the class is a polynomial in $k_a$, the $k_a^i$-coefficient of the polynomial in $k_a$ is a polynomial in $r$ with degree bounded by $2i$. We consider the pushforward of $r[\overline{M}_{g,\vec k,k_a,k_b,\vec \mu,\beta}(Y_{D_0,r}, D_\infty)]_{k_a^i}^{\on{vir}}$ to the moduli space of stable maps to $D$ and take the coefficient of $t^0$ of the total localization contribution. Note that the terms with negative power of $r$ also have negative power of $t$. So we have a polynomial in $r$. Therefore, following the proof of \cite{TY20}*{Proposition 2.1}, the pushforward of $r[\overline{M}_{g,\vec k,k_a,k_b,\vec \mu,\beta}(Y_{D_0,r}, D_\infty)]_{k_a^i}^{\on{vir}}$ is a polynomial in $r$ with degree bounded by $2g$. This completes the proof of Theorem \ref{thm-higher-genus-mid-age} when $m_-=0$.
\end{proof}

\begin{lemma}\label{lemma-higher-genus-equiv}
The equivariant class
\[
r\cdot\tau_*\left(\left[\overline{M}_{g,\vec k, k_a,k_b,\beta}(Y_{D_0,r},D_\infty)\right]_{\mathbb C^*}^{\on{vir}}\right)
\]  
is a polynomial in $k_a$ and the coefficients of the polynomial in $k_a$ are polynomials in $r$ with degree bounded by $2g$ for sufficiently large $r$. 
\end{lemma}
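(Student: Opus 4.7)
The plan is to adapt the virtual-localization argument that was just used for Theorem \ref{thm-higher-genus-mid-age} with $m_-=0$ to the $\mathbb{C}^*$-equivariant setting on the local model $(Y_{D_0,r}, D_\infty)$, following the same passage from Theorem \ref{thm-genus-zero-mid-age} to Lemma \ref{lemma-equivariant-genus-zero} used in the genus zero case. The essential difference from the non-equivariant argument is that the equivariant cycle is retained throughout, rather than collapsed to the $t^0$-coefficient at the end.

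First, I would establish the higher-genus analog of Lemma \ref{lemma-d-0}: for all $k_a>d_0$ with $d_0$ independent of $r$, the two mid-age markings $p_a$ and $p_b$ must lie on a common contracted stable vertex $v_0$ over the zero divisor. The combinatorial argument of Lemma \ref{lemma-d-0} does not use the genus, so it carries over verbatim. I would then write the $\mathbb{C}^*$-virtual localization formula for
\[
r^{m_-+1}\tau_*\!\left(\left[\overline{M}_{g,\vec k, k_a,k_b,\beta}(Y_{D_0,r},D_\infty)\right]_{\mathbb{C}^*}^{\on{vir}}\right)
\]
as a sum over decorated graphs $\Gamma$, with the same vertex, edge, and infinity contributions displayed in the proofs of Theorems \ref{thm-genus-zero-mid-age} and \ref{thm-higher-genus-mid-age}, but keeping all powers of the equivariant parameter $t$.

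Second, I would apply the Grothendieck--Riemann--Roch computation of \cite{JPPZ18}*{Sections 2.3 and 2.4} and \cite{TY20}*{Lemma 2} to each Chern class $c_i(-R^*\pi_*\mathcal L)$ appearing in the vertex factors. For vertices $v\neq v_0$, the class $r^{2i-2g(v)+1}\tau_*(c_i(-R^*\pi_*\mathcal L))$ is polynomial in $r$ of degree at most $2i$. For the mid-age vertex $v_0$, the GRR analysis shows polynomial dependence on $k_a$, with each $k_a^j$-coefficient polynomial in $r$ of degree at most $2i$ (precisely as in the double ramification cycle polynomiality of \cite{JPPZ}*{Appendix A.4}). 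Multiplying these with the edge and infinity contributions, each equivariant degree of the total sum becomes polynomial in $k_a$ with $k_a^j$-coefficients polynomial in $r$. The $2g$ bound follows from the same counting used in the proof of Theorem \ref{thm-higher-genus-mid-age}: powers of $r$ contributed by the genus defects $g(v)$ across all vertices together with the $c_i(-R^*\pi_*\mathcal L)$ classes sum to at most $2g$, and the prefactor $r^{m_-+1}$ exactly absorbs the shift produced by the $m_-$ large-age markings.

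The main obstacle is that in the non-equivariant proof one extracts the $t^0$-coefficient, which annihilates all negative-$t$ contributions in one stroke; here the polynomial behavior must be verified at every equivariant degree uniformly. This is managed by observing that the virtual dimension is fixed, so only finitely many equivariant $t$-degrees can contribute to any given ordinary Chow degree of the pushforward target $\overline{M}_{g,m+2,\beta}(X)\times_{X^{m_++m_-+2}}D^{m_++m_-+2}$. Hence the verification becomes a finite, term-by-term application of the GRR degree bounds already established for the non-equivariant case, with no new estimates beyond those used for Theorem \ref{thm-higher-genus-mid-age} with $m_-=0$.
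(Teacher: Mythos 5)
Your route is genuinely different from the paper's. The paper does not prove Lemma~\ref{lemma-higher-genus-equiv} by localization at all: the intended argument (signaled by the remark preceding the lemma) is the Totaro/Edidin--Graham approximation exactly as in Lemma~\ref{lemma-equivariant-genus-zero}, i.e.\ one realizes the $\mathbb C^*$-equivariant Chow groups as ordinary Chow groups of $(\overline{M}_{g,\vec k,k_a,k_b,\vec\mu,\beta}(Y_{D_0,r},D_\infty)\times E)/\mathbb C^*$ for $E=\mathbb C^N\setminus\{0\}$, identifies the latter with the (non-equivariant) moduli of stable maps to $Y_{D_0,r}\times_{\mathbb C^*}E$ relative to $D_\infty\times_{\mathbb C^*}E$, and then invokes the already-proved non-equivariant statement. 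In higher genus the perfect obstruction theories on the two sides of this identification no longer agree on the nose, which is precisely why the paper cites the corrections from \cite{FWY19}*{Section 4}; note how the proof of Lemma~\ref{lemma-equivariant-genus-zero} stresses ``\emph{in genus zero}, there are natural perfect obstruction theories on both sides and they are identified.'' You instead propose to run $\mathbb C^*$-virtual localization directly on $\overline{M}_{g,\vec k,k_a,k_b,\vec\mu,\beta}(Y_{D_0,r},D_\infty)$ and control every $t$-degree. That is a legitimate alternative (and has the virtue of sidestepping the obstruction-theory matching of FWY19 \S4 entirely), so this is worth keeping in mind as a second route.

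Two things, however, need attention before the proposal is a proof. First, your opening sentence claims to ``follow the same passage from Theorem~\ref{thm-genus-zero-mid-age} to Lemma~\ref{lemma-equivariant-genus-zero}''; that passage was the Totaro/Edidin--Graham reduction, not localization, so this framing contradicts the body of your argument and should be dropped. Second, and more substantively, the last paragraph hand-waves the key step. Asserting that ``only finitely many equivariant $t$-degrees can contribute'' and that this reduces the check to a ``finite, term-by-term application of the GRR degree bounds\dots with no new estimates'' hides exactly what has to be verified. The individual graph contributions in the localization sum contain both negative powers of $t$ and negative powers of $r$, and the non-equivariant argument (Proof of Theorem~\ref{thm-higher-genus-mid-age} with $m_-=0$) crucially uses the structural fact that \emph{terms with negative $r$-power also carry negative $t$-power}, so that extracting the $t^0$-coefficient simultaneously discards the negative $r$-powers. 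To conclude the lemma you must make the analogous statement explicit for every coefficient $t^k$, $k\ge 0$, after pushforward to $\overline{M}_{g,m+2,\beta}(X)\times_{X^{m_++m_-+2}}D^{m_++m_-+2}$: i.e.\ that the $t^k$-coefficient has only non-negative $r$-powers, and that its $r$-degree is still bounded by $2g$ (tracking how the exponent shift in $(tr)^{-i+g(v)}r^{2i-2g(v)+1}$ interacts with $k$). This is plausible, and a careful bookkeeping along the lines you sketch should deliver it, but as written the proposal asserts rather than proves the uniform bound across $t$-degrees.
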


\begin{proof}
This is the higher genus version of Lemma \ref{lemma-equivariant-genus-zero}. The proof is parallel to the proof of Lemma \ref{lemma-equivariant-genus-zero} with minor modifications in higher genus case as described in details in \cite{FWY19}*{Section 4}. 
\end{proof}

\subsection{Invariants with large ages}
For invariants with large ages, the proof is similar to the proof in the previous section with minor modifications for the higher genus case mentioned in \cite{FWY19}*{Section 4}.

The following result for cycle class holds by using the proof of \cite{FWY19}*{Corollary 4.2}, \cite{TY20}*{Lemma 2} and \cite{TY20}*{Lemma 3}.
\begin{lemma}\label{lemma-cycle-higher}
Given any partition $\vec k$ of $\int_\beta [D_0]$. For any positive integer $j$ and $r\gg 1$, the following class
\[
r^{j+1-g}(\tau)_*\left(c_j(-R^*\pi_*\mathcal L)\cap \left[\overline{M}_{g,\vec k, k_a,k_b,\beta}(\mathcal D_0)\right]^{\on{vir}}\right)
\]
is a  polynomial in $k_a$ and the coefficients of the polynomial in $k_a$ are polynomials in $r$ with degree bounded by $\min\{2j,2g\}$.
\end{lemma}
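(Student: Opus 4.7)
The plan is to mirror the proof of Lemma \ref{lemma-cycle} while replacing its genus-zero equivariant input (Lemma \ref{lemma-equivariant-genus-zero}) with its higher-genus counterpart Lemma \ref{lemma-higher-genus-equiv}, and then importing the Grothendieck--Riemann--Roch analysis already used in the proof of Theorem \ref{thm-higher-genus-mid-age} to obtain the sharper bound $2j$.

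First I would apply $\mathbb C^*$-virtual localization to $\overline{M}_{g,\vec k,k_a,k_b,\vec\mu,\beta}(Y_{D_0,r},D_\infty)$ and isolate the decorated graph $\Gamma_0$ consisting of a single genus-$g$ vertex over the zero section carrying all markings (including the mid-age pair), with edges prescribed by $\vec\mu$. Its fixed locus maps under $\tau$ to $\overline{M}_{g,\vec k,k_a,k_b,\beta}(\mathcal D_0)$, and the higher-genus vertex formula computed in Section \ref{sec:higher-genus} produces the residue factor
\[
\sum_{i=0}^{\infty}(t/r)^{g-i}c_i(-R^*\pi_*\mathcal L),
\]
times edge factors and, when $m_->0$, an extra $1/r$. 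The coefficient of $t^{g-j}$ in this expression is, up to $r$-normalization, precisely the class appearing in the statement.

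The second step is to invoke Lemma \ref{lemma-higher-genus-equiv}: $r^{m_-+1}$ times the equivariant pushforward of the full virtual class is a polynomial in $k_a$ whose $k_a$-coefficients are polynomials in $r$ of degree at most $2g$. Because the equivariant pushforward expands as a sum over decorated-graph residues, each of which is a Laurent polynomial in $t$, every coefficient in $t$ -- in particular $t^{g-j}$ -- inherits polynomiality in $k_a$ and the degree-$2g$ bound in $r$. This yields the $2g$ half of the stated bound. The $2j$ half comes independently from a direct Grothendieck--Riemann--Roch computation on $\overline{M}_{g,\vec k,k_a,k_b,\beta}(\mathcal D_0)$, along the lines of \cite{JPPZ18}*{Section 2.3, 2.4} and \cite{TY20}*{Lemma 2}, which controls each $c_j(-R^*\pi_*\mathcal L)$-contribution as a polynomial in $k_a$ whose $r$-degree is bounded by $2j$ thanks to the cohomological degree $j$ of the class together with the Bernoulli expansion of the Chern character. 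Taking the minimum of the two bounds gives $\min\{2j,2g\}$.

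The main obstacle is the clean separation of the single-graph residue associated to $\Gamma_0$ from the total localization sum: distinct decorated graphs contribute residues with differing $t$-valuations and $r$-scalings, and one must argue that the polynomial behavior in $k_a$ and $r$ of the total equivariant pushforward is inherited by the $\Gamma_0$ piece alone. This is exactly the mechanism exploited in the genus-zero argument of Lemma \ref{lemma-cycle} via Lemma \ref{lemma-equivariant-genus-zero}, and the $r^{m_-+1}$ prefactor in Lemma \ref{lemma-higher-genus-equiv} is precisely the right counterweight to absorb the $m_-$-dependent powers of $r$ coming from the non-mid-age vertices, just as in the higher-genus localization bookkeeping carried out earlier in Section \ref{sec:higher-genus}.
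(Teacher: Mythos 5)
Your plan matches the paper's proof in all essentials: the two ingredients are (i) the Grothendieck--Riemann--Roch / stable-graph analysis of \cite{TY20}*{Lemma 2} and \cite{JPPZ18}*{Section 2.3--2.4} which gives the $r$-degree bound $2j$, and (ii) the higher-genus equivariant statement Lemma~\ref{lemma-higher-genus-equiv} together with the localization-residue extraction on the mid-age vertex (mirroring Lemma~\ref{lemma-cycle}) which gives the $r$-degree bound $2g$. The only difference is organizational: the paper splits into cases $j\leq g$ (use GRR, getting $2j=\min\{2j,2g\}$) and $j>g$ (use the equivariant residue argument, getting $2g=\min\{2j,2g\}$), whereas you assert both bounds for all $j$ and take the minimum; since in each regime you do have the bound that the minimum actually selects, this is equivalent, though the paper's case split avoids having to justify the $2j$ GRR bound in the regime $j>g$ where it is not needed. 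Your final paragraph correctly identifies the place where the argument is schematic -- isolating the $\Gamma_0$ residue from the full equivariant pushforward -- and correctly observes this is the same mechanism already used in the genus-zero Lemma~\ref{lemma-cycle}; the paper's own proof is equally brief on this point.
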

\begin{proof}
When $j\leq g$, the proof follows from that of \cite{TY20}*{Lemma 2}, so the degree bound is $2j$. When $j >g$, the proof is similar to that of Lemma \ref{lemma-cycle}. That is, using Lemma \ref{lemma-higher-genus-equiv} and taking the localization residue, we have that the class is a polynomial in $r$ with degree bounded by $2g$. This completes the proof.
\end{proof}

Then the proof of Theorem \ref{thm-higher-genus-mid-age} follows from localization computation as in previous sections. 

\begin{proof}[Proof of Theorem \ref{thm-higher-genus-mid-age} with $m_-\neq 0$.]
The localization contributions are
\begin{itemize}
\item for each stable vertex $v$ over the zero section that does not contain mid-ages, there is a factor
\begin{align}
&\prod_{e\in E(v)}\frac{rd_e}{t+\on{ev}_{e}^*c_1(L)-d_e\bar{\psi}_{(e,v)}}\left(\sum_{i=0}^{\infty}(t/r)^{-1+g(v)+|E(v)|+m_-(v)-i}c_i(-R^*\pi_*\mathcal L)\right)\\
\notag =&\frac rt \prod_{e\in E(v)}\frac{d_e}{1+\frac{\on{ev}_{e}^*c_1(L)-d_e\bar{\psi}_{(e,v)}}{t}}\left(\sum_{i=0}^{\infty}(t/r)^{-i+g(v)+m_-(v)}c_i(-R^*\pi_*\mathcal L)\right)\\
\notag =&\frac {t^{m_-(v)-1}}{r^{m_-(v)}} \prod_{e\in E(v)}\frac{d_e}{1+\frac{\on{ev}_{e}^*c_1(L)-d_e\bar{\psi}_{(e,v)}}{t}}\left(\sum_{i=0}^{\infty}t^{-i+g(v)}r^{i-g(v)+1}c_i(-R^*\pi_*\mathcal L)\right);
\end{align}
\item for the stable vertex $v_0$ over the zero section containing mid-ages, there is a factor
\begin{align}
&\prod_{e\in E(v_0)}\frac{rd_e}{t+\on{ev}_{e}^*c_1(L)-d_e\bar{\psi}_{(e,v_0)}}\left(\sum_{i=0}^{\infty}(t/r)^{g(v_0)+|E(v_0)|+m_-(v_0)-i}c_i(-R^*\pi_*\mathcal L)\right)\\
\notag =& \prod_{e\in E(v_0)}\frac{d_e}{1+\frac{\on{ev}_{e}^*c_1(L)-d_e\bar{\psi}_{(e,v_0)}}{t}}\left(\sum_{i=0}^{\infty}(t/r)^{-i+g(v_0)+m_-(v_0)}c_i(-R^*\pi_*\mathcal L)\right)\\
\notag =& \frac{t^{m_-(v_0)}}{r^{m_-(v_0)+1}}\prod_{e\in E(v_0)}\frac{d_e}{1+\frac{\on{ev}_{e}^*c_1(L)-d_e\bar{\psi}_{(e,v_0)}}{t}}\left(\sum_{i=0}^{\infty}t^{-i+g(v_0)}r^{i-g(v_0)+1}c_i(-R^*\pi_*\mathcal L)\right);
\end{align}

\item if the target expands over the infinity section, there is a factor
\begin{align}
\frac{\prod_{e\in E(\Gamma)}d_e}{-t-\psi_\infty}=-\frac{1}{t}\frac{\prod_{e\in E(\Gamma)}d_e}{1+\frac{\psi_\infty}{t}}.
\end{align}
\end{itemize}

We want to take the coefficient of $t^0$ of the total localization contribution. Again, the class
\[
r^{2i-2g(v)+1}(\tau)_*\left(c_i(-R^*\pi_*\mathcal L)\right)
\]
is a polynomial in $r$ with degree bounded by $2i$. Together with the polynomiality of Lemma \ref{lemma-cycle-higher}, we obtain that the pushforward of 
\[
r^{m_-+1}[\overline{M}_{g,\vec k,k_a,k_b,\vec \mu,\beta}(Y_{D_0,r}, D_\infty)]^{\on{vir}}
\]
to the moduli space of stable maps to $D$ is a  polynomial in $k_a$ and the coefficients of the polynomial in $k_a$ are polynomials in $r$ with degree bounded by $2g$. This completes the proof of Theorem \ref{thm-higher-genus-mid-age}.
\end{proof}

\if{
\begin{remark}
By \cite{FWY19} and \cite{TY20}, for sufficiently large $r$, invariants with $k_a\leq d_0$ will be polynomials in $r$ with degree bounded by $2g-1$ and the constant terms are relative invariants with negative contact orders. As $k_a$ gets bigger, the proof of \cite{FWY19} and \cite{TY20} will not work eventually, because we will need bigger $r$. They are polynomials in $r$ with degree bounded by $2g$ instead of $2g-1$. 
It gives another explanation for the degree bound of orbifold invariants of $X_{D,r}$ in \cite{TY20}. In \cite{TY20}, it is proved that genus $(g+1)$ orbifold invariants of root stacks (without mid-ages) are polynomials of degree bounded by $2g+1$. By the loop axiom for orbifold Gromov--Witten theory, they can be written as sum of genus $g$ orbifold invariants. The summation includes genus $g$ orbifold invariants with a pair of mid-ages which are polynomials whose degrees are bounded by $2g$. And extra factor of $r$ comes in after summing over invariants with mid-ages.
\end{remark}
}\fi

\section{The loop axiom for relative Gromov--Witten theory}

\subsection{CohFT}\label{sec-cohft}
Let $\overline {M}_{g,m}$ be the moduli space of genus $g$, $m$-pointed stable curves. We assume that $2g-2+m>0$. There are several canonical morphisms between moduli space $\overline{M}_{g,m}$ of stable curves. 
\begin{itemize}
    \item There is a forgetful morphism
    \[
    \pi:\overline{M}_{g,m+1}\rightarrow \overline{M}_{g,m}
    \]
    obtained by forgetting the last marking of $(m+1)$-pointed, genus $g$ curves in $\overline{M}_{g,m+1}$.
    \item There is a morphism of gluing the loop
    \[
    \rho_l: \overline{M}_{g,m+2}\rightarrow \overline{M}_{g+1,m}
    \]
    obtained by identifying the last two markings of the $(m+2)$-pointed, genus $g$ curves in $ \overline{M}_{g,m+2}$.
    \item There is a morphism of gluing the tree
    \[
    \rho_t:\overline{M}_{g_1,m_1+1}\times \overline{M}_{g_2,m_2+1}\rightarrow \overline{M}_{g_1+g_2,m_1+m_2}
    \]
    obtained by identifying the last markings of separate pointed curves in $\overline{M}_{g_1,m_1+1}\times \overline{M}_{g_2,m_2+1}$.
\end{itemize}

The state space $H$ is a graded vector space with a non-degenerate pairing $\langle ,\rangle$ and a distinguished element $1\in H$. Given a basis $\{e_i\}$, let $\eta_{jk}=\langle e_j,e_k\rangle$ and $(\eta^{jk})=(\eta_{jk})^{-1}$.

A cohomological field theory (CohFT) is a collection of homomorphisms
\[
\Omega_{g,m}: H^{\otimes m}\rightarrow H^*(\overline{M}_{g,m},\mathbb Q)
\]
satisfying the following axioms:
\begin{itemize}
    \item The element $\Omega_{g,m}$ is invariant under the natural action of symmetric group $S_m$.
    \item For all $a_i\in H$, $\Omega_{g,m}$ satisfies
    \[
    \Omega_{g,m+1}(a_1,\ldots,a_m,1)=\pi^*\Omega_{g,m}(a_1,\ldots,a_m).
    \]
    \item The splitting axiom:
    \begin{align*}
    &\rho^*_t\Omega_{g_1+g_2,m_1+m_2}(a_1,\ldots,a_{m_1+m_2})=\\
    &\sum_{j,k}\eta^{jk}\Omega_{g_1,m_1}(a_1,\ldots,a_{m_1},e_j)\otimes \Omega_{g_2,m_2}(a_{m_1+1},\ldots,a_{m_1+m_2},e_k),
    \end{align*}
    for all $a_i\in H$.
    \item The loop axiom:
    \[
    \rho_l^*\Omega_{g+1,m}(a_1,\ldots,a_m)=\sum_{j,k}\eta^{jk}\Omega_{g,m+2}(a_1,\ldots,a_m,e_j,e_k),
    \]
    for all $a_i\in H$. In addition, the equality
    \[
    \Omega_{0,3}(v_1,v_2,1)=\langle v_1,v_2\rangle
    \]
    holds for all $v_1,v_2\in H$.
\end{itemize}

A natural example for CohFT is Gromov--Witten theory. In particular, orbifold Gromov--Witten theory of the $r$-th root stack $X_{D,r}$ is a CohFT and the state space is the Chen--Ruan cohomology  ring $H^*(IX_{D,r})$. Note that we ignore odd cohomology classes. A CohFT can be formulated with signs in the presence of odd cohomology classes.

Following \cite{LRZ}, a CohFT without the loop axiom is called a partial CohFT. Following \cite{FWY19}*{Theorem 3.16}, relative Gromov--Witten theory defined in \cite{FWY19}*{Section 3} is a partial CohFT. The construction is as follows.

The ring of insertions (state space) for relative Gromov--Witten theory, defined in \cite{FWY}, is
\[
\HH=\bigoplus\limits_{i\in\mathbb Z}\HH_i,
\]
where $\HH_0=H^*(X)$ and $\HH_i=H^*(D)$ if $i\in \mathbb Z - \{0\}$.
For an element $\gamma\in \HH_i$, we write $[\gamma]_i$ for its embedding in $\HH$. 

The pairing on $\HH$ is defined as follows:
\begin{equation}\label{eqn:pairing}
\begin{split}
([\gamma]_i,[\delta]_j) = 
\begin{cases}
0, &\text{if } i+j\neq 0;\\
\int_X \gamma\cup\delta, &\text{if } i=j=0; \\
\int_D \gamma\cup\delta, &\text{if } i+j=0, i,j\neq 0.
\end{cases}
\end{split}
\end{equation}

For a sufficiently large integer $r$, an element $[\gamma]_i$ naturally corresponds to a cohomology class $\gamma\in H^*(\underline{I}X_{D,r})$ that lies in either a component with age $i/r$ (if $i\geq0$), or a component with age $(r+i)/r$ (if $i<0$). 
We will write $H^*(\underline{I}X_{D,r})_i$ for the cohomology of the twisted sector with age $i/r$. 
To simplify the notation in the next definition, we later refer to a general element of $\HH$ by simply writing $[\gamma]\in \HH$ without a subscript. 

Consider the forgetful map 
\[
\pi:\overline{M}_{g,m}(X,\beta) \times_{X^{m_++m_-}} D^{m_++m_-} \rightarrow \overline{M}_{g,m},
\]
where a list of contact orders will be implied in the context, and the fiber product remembers the $(m_++m_-)$ markings that correspond to relative markings.

\begin{defn}\label{defn-relative-GW-class}
Given elements $[\gamma_1],\ldots,[\gamma_m]\in \HH$, the relative Gromov--Witten class is defined as
\[
\Omega^{(X,D)}_{g,m,\beta}([\gamma_1],\ldots,[\gamma_m])=\pi_*\left(\prod_{i=1}^m\overline{\on{ev}}_i^*(\gamma_i)\cap\mathfrak c_\Gamma(X/D)\right)\in H^*(\overline{M}_{g,m},\mathbb Q),
\]
where the topological type $\Gamma$ is determined by $g,m,\beta$ and the insertions $[\gamma_1],\ldots,[\gamma_m]\in \HH$. We refer to \cite{FWY19}*{Section 3.3} for the definition of the evaluation map $\overline{\on{ev}}_ i$ and the class 
\[
\mathfrak c_\Gamma(X/D)\in \overline{M}_{g,m}(X,\beta) \times_{X^{m_++m_-}} D^{m_++m_-}.
\] 
We then define the class
\[
\Omega^{(X,D)}_{g,m}([\gamma_1],\ldots,[\gamma_m])=\sum_{\beta\in H_2(X,\mathbb Z)}\Omega^{(X,D)}_{g,m,\beta}([\gamma_1],\ldots,[\gamma_m])q^\beta.
\]
\end{defn}

\subsection{Genus one}

\begin{proof}[Proof of Theorem \ref{thm-genus-one-loop-axiom}]
The loop axiom for genus one orbifold Gromov--Witten theory of root stack $X_{D,r}$ is 
\begin{align*}
\rho_l^*\Omega^{X_{D,r}}_{1,m}(\gamma_1,\ldots,\gamma_m)=&\sum_{j,k}\eta^{jk}\Omega^{X_{D,r}}_{0,m+2}(\gamma_1,\ldots,\gamma_m,e_j,e_k).
\end{align*}

Let $p_j$ and $p_k$ be the two new markings corresponding to $e_j$ and $e_k$. Let $k_a/r$ and $k_b/r$ be the corresponding ages for $p_j$ and $p_k$ respectively. Then we need to have $k_a+k_b=r$, otherwise we have $\eta^{jk}=0$. Now, we do not assume that $k_a\leq k_b$.

By Theorem \ref{thm-genus-zero-mid-age},  there are sufficiently large $r$ such that, for $g=0$ and $d_0<k_a<r-d_0$, the orbifold Gromov--Witten classes are the same and independent of $r$. Moreover, when $k_a\leq d_0$ or $k_a\geq r-d_0$, genus zero orbifold Gromov--Witten classes coincide with genus zero relative Gromov--Witten classes with negative contact orders \cite{FWY}. Note that for the orbifold Gromov--Witten theory, the orbifold pairing for the $\mu_r$-gerbe $D_r$ requires an extra factor of $r$. This is consistent with the result in \cite{FWY} which states that, in order to relate relative and orbifold invariants when there is a large age marking, we need to multiple orbifold invariants by a factor of $r$. 
Similarly, in Theorem \ref{thm-genus-zero-mid-age}, orbifold invariants are also multiplied by an extra factor of $r$ when we add two mid-age markings.

Therefore, the loop axiom for genus one orbifold Gromov--Witten theory implies the loop axiom for genus one relative Gromov--Witten class as follows.

\begin{align*}
&r^{m_-}\rho_l^*\Omega^{X_{D,r}}_{1,m}(\gamma_1,\ldots,\gamma_m)\\
=&\sum_{j,k: e_j\in H^*(\underline{I}X_{D,r})_{k_a}, k_a\leq d_0, \text{ or } k_a\geq r-d_0}\eta^{jk}r^{m_-}\Omega^{X_{D,r}}_{0,m+2}(\gamma_1,\ldots,\gamma_m,e_j,e_k)\\
&+\sum_{j,k: e_j\in H^*(\underline{I}X_{D,r})_{k_a},d_0< k_a<r- d_0}\eta^{jk}r^{m_-}\Omega^{X_{D,r}}_{0,m+2}(\gamma_1,\ldots,\gamma_m,e_j,e_k)\\
=& \sum_{j,k: [e_j]\in \HH_{k_a}, k_a\leq d_0, \text{ or }k_a\geq r-d_0}\eta^{jk}\Omega^{(X,D)}_{0,m+2}([\gamma_1],\ldots,[\gamma_m],[e_j],[e_k])\\
&+(r-2d_0-1)\sum_{j,k: e_j\in H^*(\underline{I}X_{D,r})_{d_0+1}}\eta^{jk}r^{m_-}\Omega^{X_{D,r}}_{0,m+2}(\gamma_1,\ldots,\gamma_m,e_j,e_k).
\end{align*}

Therefore, we have the following system of equations
\begin{align*}
&\left[r^{m_-}\rho_l^*\Omega^{X_{D,r}}_{1,m}(\gamma_1,\ldots,\gamma_m)\right]_{r^0}\\
=&\sum_{j,k: [e_j]\in \HH_{k_a}, k_a\leq d_0 \text{ or }k_a\geq r-d_0}\eta^{jk}\Omega^{(X,D)}_{0,m+2}([\gamma_1],\ldots,[\gamma_m],[e_j],[e_k])\\
&-(2d_0+1)\sum_{j,k: e_j\in H^*(\underline{I}X_{D,r})_{ d_0+1}}\eta^{jk}r^{m_-}\Omega^{X_{D,r}}_{0,m+2}(\gamma_1,\ldots,\gamma_m,e_j,e_k).
\end{align*}
and 
\[
\left[r^{m_-}\rho_l^*\Omega^{X_{D,r}}_{1,m}(\gamma_1,\ldots,\gamma_m)\right]_{r^1}=\sum_{j,k: e_j\in H^*(\underline{I}X_{D,r})_{d_0+1}}\eta^{jk}r^{m_-}\Omega^{X_{D,r}}_{0,m+2}(\gamma_1,\ldots,\gamma_m,e_j,e_k).
\]
Hence,
\begin{align*}
&\rho_l^*\Omega^{(X,D)}_{1,m}([\gamma_1],\ldots,[\gamma_m])\\
=&\sum_{j,k: [e_j]\in \HH_{k_a}, k_a\leq d_0 \text{ or } k_a\geq r-d_0}\eta^{jk}\Omega^{(X,D)}_{0,m+2}([\gamma_1],\ldots,[\gamma_m],[e_j],[e_k])\\
&-(2d_0+1)\left[r^{m_-}\rho_l^*\Omega^{X_{D,r}}_{1,m}(\gamma_1,\ldots,\gamma_m)\right]_{r^1}.
\end{align*}

For any $d^\prime$ that satisfies $d_0<d^\prime<r/2$, we can also write the loop axiom as follows
\begin{align*}
&\rho_l^*\Omega^{(X,D)}_{1,m}([\gamma_1],\ldots,[\gamma_m])\\
=&\sum_{j,k: [e_j]\in \HH_{k_a}, k_a\leq d^\prime \text{ or }k_a\geq r-d^\prime}\eta^{jk}\Omega^{(X,D)}_{0,m+2}([\gamma_1],\ldots,[\gamma_m],[e_j],[e_k])\\
&-(2d^\prime+1)\left[r^{m_-}\rho_l^*\Omega^{X_{D,r}}_{1,m}(\gamma_1,\ldots,\gamma_m)\right]_{r^1}.
\end{align*}
\end{proof}

\begin{remark}
The identity can be viewed as a modification of the usual loop axiom with a correction term which is given by the $r$-coefficient of the genus one orbifold Gromov--Witten class. The meanings of the non-constant coefficient of higher genus orbifold invariants are studied in \cite{TY20}. In particular, the $r$-coefficient of the genus one orbifold Gromov--Witten invariants can be expressed in terms of sum of genus zero relative Gromov--Witten invariants multiplied by genus one absolute Gromov--Witten invariants of the divisor $D$. Our result provides another explanation for the meaning of the $r$-coefficient of the genus one orbifold Gromov--Witten class.
\end{remark}

\begin{example}\label{example}
In \cite{FWY19}*{Example 3.17}, there is a counterexample for the loop axiom of relative Gromov--Witten theory of $(X,D)$, where $X=\mathbb P^1$ and $D$ is a point in X. Let $1$ be the identity class in $H^*(X)$ and $\omega$ be the point class in $X$. We have
\[
\Omega^{(X,D)}_{1,1,0}([1]_0)=1;
\]
\[
\Omega^{(X,D)}_{0,3,0}([1]_0,[1]_0,[\omega]_0)=1;
\]
\[
\Omega^{(X,D)}_{0,3,0}([1]_0,[1]_i,[1]_{-i})=1, i\in \mathbb{Z}^*.
\]
The regular loop axiom does not hold for the relative Gromov--Witten theory of $(X,D)$, but it holds for the corresponding orbifold Gromov--Witten theory. For orbifold Gromov--Witten theory, the loop axiom is
\begin{align*}
\Omega^{X_{D,r}}_{1,1,0}(1)&=2\Omega^{X_{D,r}}_{0,3,0}(1,1,\omega)+\sum_{i=1}^{r-1}\Omega^{X_{D,r}}_{0,3,0}(1,1_{i/r},r1_{(r-i)/r})\\
&=2+(r-1)\cdot 1
\end{align*}
Taking the constant term, we have:
\[
1=2+\left[(r-1)\cdot 1\right]_{r^0}.
\]
In other words, we have
\[
\Omega^{(X,D)}_{1,1,0}([1]_0)=2\Omega^{(X,D)}_{0,3,0}([1]_0,[1]_0,[\omega]_0)-1\cdot \left[\Omega^{X_{D,r}}_{1,1,0}(1)\right]_{r^1}.
\]
\end{example}

\subsection{Higher genus}\label{sec:higher-genus-loop}

\begin{proof}[Proof of Theorem \ref{thm-higher-genus-loop-axiom}]
Given a partition $\vec k$, we consider the loop axiom of genus $(g+1)$ orbifold Gromov--Witten theory of $X_{D,r}$ whose contact orders are given by $\vec k$. The loop axiom for genus $(g+1)$ orbifold Gromov--Witten theory of root stack $X_{D,r}$ is 
\begin{align}\label{loop-orb-g}
\rho_l^*\Omega^{X_{D,r}}_{g+1,m}(\gamma_1,\ldots,\gamma_m)=&\sum_{j,k}\eta^{jk}\Omega^{X_{D,r}}_{g,m+2}(\gamma_1,\ldots,\gamma_m,e_j,e_k).
\end{align}

For sufficiently large $r$, by \cite{FWY19} and \cite{TY20}, we know that 
\[
r^{m_-}\Omega^{X_{D,r}}_{g,m+2}(\gamma_1,\ldots,\gamma_m,e_j,e_k)
\] 
are polynomials in $r$ with degree bounded by $2g-1$ and the constant terms are relative Gromov--Witten classes with negative orders for $k_a\leq d_0$ or $k_a\geq r-d_0$. The sum of the rest of the terms on the right-hand side of (\ref{loop-orb-g}) is a polynomial in $r$ with degree bounded by $2g+1$. The correction term of the loop axiom of relative Gromov--Witten theory is given by
\begin{align}\label{higher-genus-correction}
C_{d_0}:=\left[\sum_{j,k: e_j\in H^*(\underline{I}X_{D,r})_{k_a},d_0<k_a<r-d_0} \eta^{jk}r^{m_-}\Omega^{X_{D,r}}_{g,m+2}(\gamma_1,\ldots,\gamma_m,e_j,e_k)\right]_{r^0}
\end{align}
The modified loop axiom can be written as
\begin{align*}
&\rho_l^*\Omega^{(X,D)}_{g+1,m}([\gamma_1],\ldots,[\gamma_m])\\
=&\sum_{j,k: [e_j]\in \HH_{k_a}, k_a\leq d_0 \text{ or }k_a\geq r-d_0}\eta^{jk}\Omega^{(X,D)}_{g,m+2}([\gamma_1],\ldots,[\gamma_m],[e_j],[e_k])+C_{d_0}.
\end{align*}
\end{proof}


We explain how to write the correction term $C_{d_0}$ more explicitly. For $e_j\in H^*(\underline{I}X_{D,r})_{k_a},k_a>d_0$, we can write
\[
\Omega^{X_{D,r}}_{g,m+2}(\gamma_1,\ldots,\gamma_m,e_j,e_k)=\sum_{i\geq 0}\left[\Omega^{X_{D,r}}_{g,m+2}(\gamma_1,\ldots,\gamma_m,e_j,e_k)\right]_{k_a^i}k_a^i.
\]
Then summing over $k_a$ for $d_0<k_a<r-d_0$, we have
\begin{align}\label{poly-C-0}
&\sum_{j,k: e_j\in H^*(\underline{I}X_{D,r})_{k_a},d_0<k_a<r-d_0} \eta^{jk}r^{m_-}\Omega^{X_{D,r}}_{g,m+2}(\gamma_1,\ldots,\gamma_m,e_j,e_k)\\
\notag=&\sum_{j,k: e_j\in H^*(\underline{I}X_{D,r})_{k_a},d_0<k_a<r-d_0} \sum_{i\geq 0}\eta^{jk}r^{m_-}\left[\Omega^{X_{D,r}}_{g,m+2}(\gamma_1,\ldots,\gamma_m,e_j,e_k)\right]_{k_a^i}k_a^i\\
\notag=&\sum_{j,k: e_j\in H^*(\underline{I}X_{D,r})_{k_a},k_a=d_0+1} \sum_{i\geq 0}\eta^{jk}r^{m_-}\left[\Omega^{X_{D,r}}_{g,m+2}(\gamma_1,\ldots,\gamma_m,e_j,e_k)\right]_{k_a^i}\left(\sum_{l=d_0+1}^{r-d_0-1}l^i\right)
\end{align}
Recall that, by Faulhaber's formula, we have
\begin{align}\label{Faulhaber}
\sum_{l=1}^r l^i=\frac{r^{i+1}}{i+1}+\frac{1}{2} r^i+\sum_{l=2}^i\frac{B_l}{l!}i^{\underline{l-1}}r^{i-l+1},
\end{align}
for $i>0$, where $i^{\underline{l-1}}=\frac{i!}{(i-l+1)!}$. Therefore, (\ref{Faulhaber}) is considered as a polynomial in $r$. Note that there is no constant term in $r$ in Faulhaber's formula. Therefore, for $i>0$, we have
\begin{align*}
\left[\left(\sum_{l=d_0+1}^{r-d_0-1}l^i\right)\right]_{r^0}&=-\left(\sum_{l=1}^{d_0}l^i\right)-\left[\left(\sum_{l=r-d_0}^{r}l^i\right)\right]_{r^0}\\
&=-\left((1+(-1)^i)\sum_{l=1}^{d_0}l^i\right).
\end{align*}
For $i=0$, we simply have
\[
\left[\left(\sum_{l=d_0+1}^{r-d_0-1}l^i\right)\right]_{r^0}=\left[r-2d_0-1\right]_{r^0}=-2d_0-1.
\]

Since $C_{d_0}$ is the constant term of (\ref{poly-C-0}), $C_{d_0}$ can be written as
\begin{align*}
&\left(-2d_0-1\right)\sum_{j,k: e_j\in H^*(\underline{I}X_{D,r})_{k_a},k_a=d_0+1} \eta^{jk}\left[r^{m_-}\left[\Omega^{X_{D,r}}_{g,m+2}(\gamma_1,\ldots,\gamma_m,e_j,e_k)\right]_{k_a^0}\right]_{r^0}+\\
&\sum_{i> 0}\left(-(1+(-1)^i)\sum_{l=1}^{d_0}l^i\right)\sum_{\substack{j,k: e_j\in H^*(\underline{I}X_{D,r})_{k_a}\\k_a=d_0+1}} \eta^{jk}\left[r^{m_-}\left[\Omega^{X_{D,r}}_{g,m+2}(\gamma_1,\ldots,\gamma_m,e_j,e_k)\right]_{k_a^i}\right]_{r^0}\\
&=\left(-2d_0-1\right)\sum_{j,k: e_j\in H^*(\underline{I}X_{D,r})_{k_a},k_a=d_0+1} \eta^{jk}\left[r^{m_-}\left[\Omega^{X_{D,r}}_{g,m+2}(\gamma_1,\ldots,\gamma_m,e_j,e_k)\right]_{k_a^0}\right]_{r^0}+\\
&\sum_{i> 0}\left(-2\sum_{l=1}^{d_0}l^{2i}\right)\sum_{\substack{j,k: e_j\in H^*(\underline{I}X_{D,r})_{k_a}\\k_a=d_0+1}} \eta^{jk}\left[r^{m_-}\left[\Omega^{X_{D,r}}_{g,m+2}(\gamma_1,\ldots,\gamma_m,e_j,e_k)\right]_{k_a^{2i}}\right]_{r^0}.
\end{align*}

It would be interesting to write the correction term in the loop axiom for higher genus relative Gromov--Witten theory in terms of a graph sum over moduli space of relative stable maps and rubber maps. Based on the result in genus one, we may also expect that the correction term in higher genus case is related to the coefficients of higher power of $r$. 

Since Virasoro constraints are at the level of invariants, it may be enough to consider the loop axiom at the level of invariants if one is only interested in its relation with Virasoro constraints. Here, we write down an example at the level of invariants where the correction terms are just zero.
\begin{example}\label{example-stationary}
We consider stationary invariants of curves.  Let $X=C$ be a smooth projective curve and $q$ be a point in $C$. We consider the root stack $C[r]$ of $C$ by taking $r$-th root along $q$. Here, we will use a slightly different notation for invariants. Let
\[
\vec k=(k_1,\ldots,k_m)\in (\mathbb Z_{>0})^m
\]
represent contact orders of relative marked points, where $m$ is the number of relative marked points. We also assume there are $n$ interior marked points with point constraints.
The stationary orbifold invariant of $C[r]$ is
\begin{align}\label{rel-inv-curve}
\left\langle \prod_{i=1}^n\tau_{a_i}(\omega)\right\rangle_{g,n,\vec k,d}^{C[r]},
\end{align}
where $\omega \in H^2(C,\mathbb Q)$ denote the class that is Poincar\'e dual to a point and $d\in H_2(C,\mathbb Z)$ is the degree of the curve class. By \cite{TY18}*{Theorem 1.9}, when $r$ is sufficiently large, stationary orbifold invariant (\ref{rel-inv-curve}) is constant in $r$ and equals to the corresponding relative invariant. By the loop axiom, we have
\[
\left\langle \prod_{i=1}^n\tau_{a_i}(\omega)\right\rangle_{g+1,n,\vec k,d}^{C[r]}=2\left\langle \prod_{i=1}^n\tau_{a_i}(\omega),1,\omega\right\rangle_{g,n+2,\vec k,d}^{C[r]}+\sum_{i=1}^{r-1}r\left\langle \prod_{i=1}^n\tau_{a_i}(\omega)\right\rangle_{g,n,\vec k,k_a,k_b,d}^{C[r]}.
\]

By the string equation, $\left\langle \prod_{i=1}^n\tau_{a_i}(\omega),1,\omega\right\rangle_{g,n+2,\vec k,d}^{C[r]}$ is also a stationary invariant, hence constant in $r$. 

For $\left\langle \prod_{i=1}^n\tau_{a_i}(\omega)\right\rangle_{g,n,\vec k,k_a,k_b,d}^{C[r]}$ with $k_a<k_j$ for some $k_j$ in $\vec k$, we degenerate $C$ into $C\cup_p \mathbb P^1$ such that the orbifold point $q$ is distributed to $\mathbb P^1$ and all stationary marked points are distributed to $C$. The degeneration formula \cite{Li2} can be used to write invariants of $C[r]$ in terms of sum of products of invariants of $(C,p)$ and (disconnected) invariants of $(\mathbb P^1[r],\infty)$. Note that there are no insertions for the invariant of $(\mathbb P^1[r],\infty)$, therefore the virtual dimension is zero. Similar to \cite{TY18}*{Section 5.1}, the invariants of $(\mathbb P^1[r],\infty)$ must be genus zero with one relative marked point, one orbifold marked point with small age, possibly one large age marked point, and no interior marked points. Note that there is only one large age marked point and the large age marked point does not affect the virtual dimension. Therefore, these genus zero invariants of $(\mathbb P^1[r],\infty)$ are simply genus zero invariants of $(\mathbb P^1,0,\infty)$ by \cite{FWY}, hence constant in $r$. In summary, we have
\[
r\left\langle \prod_{i=1}^n\tau_{a_i}(\omega)\right\rangle_{g,n,\vec k,k_a,k_b,d}^{C[r]}=\left\langle \prod_{i=1}^n\tau_{a_i}(\omega)\right\rangle_{g,n,\vec k,k_a,k_b,d}^{(C,q)}
\]
if $k_a<k_j$ for some $k_j$ in $\vec k$.

For $\left\langle \prod_{i=1}^n\tau_{a_i}(\omega)\right\rangle_{g,n,\vec k,k_a,k_b,d}^{C[r]}$ with $k_j<k_a\leq r/2$ for all $k_j$ in $\vec k$. The computation is similar to the previous case. The degeneration formula and the virtual dimension constraint again imply the invariants of $(\mathbb P^1[r],\infty)$ must be genus zero. For the component containing orbifold marked point with age $k_b/r$, it must also contain at least two more orbifold markings in order to satisfy the orbifold condition. Then the virtual dimension is at least 1 which is a contradiction. Therefore, 
\[
 \left\langle \prod_{i=1}^n\tau_{a_i}(\omega)\right\rangle_{g,n,\vec k,k_a,k_b,d}^{C[r]}=0,
\]
if $k_a>k_j$ for all $k_j$ in $\vec k$. 

As a result, the identity for the loop axiom for relative stationary invariants of curves can be written as
\[
\left\langle \prod_{i=1}^n\tau_{a_i}(\omega)\right\rangle_{g+1,n,\vec k,d}^{(C,q)}=2\left\langle \prod_{i=1}^n\tau_{a_i}(\omega),1,\omega\right\rangle_{g,n+2,\vec k,d}^{(C,q)}+2\sum_{k_a=1}^{d_0}\left\langle \prod_{i=1}^n\tau_{a_i}(\omega)\right\rangle_{g,n,\vec k,k_a,k_b,d}^{(C,q)},
\]
for $d_0>k_j$ for all $k_j$ in $\vec k$. In this case, the correction term is simply zero and mid-age invariants vanish. Therefore, we simply get a finite sum without any correction. This also provides another explanation why stationary orbifold invariants of target curves are constant in $r$.

\end{example}



\bibliographystyle{amsxport}
\bibliography{main}

\end{document}